\newcommand{\address}[1]{\gdef\@address{#1}}
\newcommand{\email}[1]{\gdef\@email{\url{#1}}}
\newcommand{\@endstuff}{\par\vspace{\baselineskip}\noindent\small
\begin{tabular}{@{}l}\@address\\\textit{E-mail address:} \@email\end{tabular}}
\newtheorem{theorem}{Theorem}[section]
\newtheorem{definition}[theorem]{Definition}
\newtheorem{example}[theorem]{Example}
\newtheorem{lemma}[theorem]{Lemma}
\newtheorem{remark}[theorem]{Remark}
\newtheorem{proposition}[theorem]{Proposition}
\newcommand{\citep}[1]{\cite{#1}}
\newcommand{\grad}{{\rm grad}}
\newcommand{\dvol}{\text{\normalfont dvol}}
\newcommand{\inv}{\text{\normalfont inv}}
\newcommand{\supp}{\text{\normalfont supp}}
\newcommand{\ind}{\text{\normalfont ind}}
\newcommand{\interiorproduct}{\mathbin{\lrcorner}}
\newcommand{\even}{\text{\normalfont{even}}}
\newcommand{\indtwo}{\text{\normalfont ind}_2^G}
\newcommand{\clifford}{Cl_{0,1}}
\newcommand{\sig}{\text{\normalfont sig}}
\newcommand{\ltwoeven}{L^2(\Lambda^\even T^*M)}
\newcommand{\tsig}{\mathscr{T}_{\sig}}
\newcommand{\dsig}{\mathscr{D}_\sig}
\begin{document}
\title{\textbf{Kervaire semi-characteristics in $\mathbf{KK}$-theory and an Atiyah type vanishing theorem}}
\author{Hao Zhuang}
\address{Department of Mathematics, Washington University in St. Louis}
\email{hzhuang@wustl.edu}
\date{\today}
\maketitle
\begin{abstract}
On $(4n+1)$-dimensional (noncompact) manifolds admitting proper cocompact Lie group actions, we explore the analytic and topological sides of Kervaire semi-characteristics. The analytic side puts together two interpretations, one via assembly maps, and the other via dimensions of kernels. The topological side is ensured by the proper cocompact version of the Hodge theorem. The two sides coincide and admit an Atiyah type vanishing theorem. 
\end{abstract} 
\tableofcontents
\section{Introduction}
Topological invariants on closed manifolds often have insightful index theoretic interpretations. For example, for any $(4n+1)$-dimensional closed oriented manifold $X$, its (topological) Kervaire semi-characteristic $k(X)$ is given by
the dimension of even degree cohomology modulo 2. 
For the analytic side of $k(X)$, in \cite{atiyah2013vector}, Atiyah pointed out that $k(X)$ equals the mod 2 index of $\hat{c}(\dvol)\circ(d+d^*)(1+(d+d^*)^2)^{-1/2}$ on the space of even-degree forms on $X$. Here, the mod 2 index is defined by Atiyah and Singer in \cite{atiyahsingerskewadjoint}: For any skew-adjoint Fredholm operator $F$ on a separable real Hilbert space, 
$\ind_2(F)\coloneqq \dim\ker F\ (\text{mod\ }2)$. It has the name ``index'' because for any compact skew-adjoint $F'$, we have $\ind_2(F+F') = \ind_2(F).$ 

By adding a compact skew-adjoint term to $\hat{c}(\dvol)\circ(d+d^*)(1+(d+d^*)^2)^{-1/2}$, Atiyah proved a vanishing Theorem in \cite[Section 4]{atiyah2013vector}, showing the ``characteristic'' feature of $k(X)$:
\begin{theorem}\label{atiyah vanishing theorem}
    If there are two everywhere independent vector fields on $X$, then $k(X) = 0$. 
\end{theorem}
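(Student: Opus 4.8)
The plan is to follow Atiyah's quaternionic trick (cf.\ \cite{atiyah2013vector}): from the two vector fields I will produce a bounded real-linear operator $J$ on $H := L^2(\Lambda^{\even}T^*X)$ with $J^* = -J$ and $J^2 = -\mathrm{Id}$ commuting, modulo compact operators, with $F := \hat c(\dvol)\circ D(1+D^2)^{-1/2}$, where $D := d+d^*$; recall from the discussion above that $F$ is skew-adjoint and Fredholm with $\ind_2 F = k(X)$. Granting such a $J$, I would set $\tilde F := \tfrac12(F + JFJ^{-1})$ and note, using $J^{-1} = -J$, that $\tilde F$ is skew-adjoint, that $\tilde F - F = \tfrac12 J[J,F]$ is compact, and that $J\tilde F J^{-1} = \tilde F$. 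Then $\tilde F$ is again skew-adjoint Fredholm, $\ind_2\tilde F = \ind_2 F = k(X)$ by the compact-perturbation invariance of $\ind_2$ recalled in the introduction, and $J$, commuting with $\tilde F$, preserves the finite-dimensional real space $\ker\tilde F$, where $J^2 = -\mathrm{Id}$ defines a complex structure. Hence $\dim_{\mathbb R}\ker\tilde F$ is even and $k(X) = \ind_2\tilde F = 0$.

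To build $J$, use the Riemannian metric and Gram--Schmidt to replace the two everywhere-independent vector fields by an orthonormal $2$-frame $e_1, e_2 \in \Gamma(TX)$. Let $\hat c$ be the Clifford action on $\Lambda^* T^*X$ given by $\hat c(v) = \epsilon(v^\flat) + \iota(v)$ (exterior multiplication by $v^\flat$ plus interior multiplication by $v$) --- the action whose volume element is the operator $\hat c(\dvol)$ occurring in $F$; it reverses parity, satisfies $\hat c(v)^* = \hat c(v)$ and $\hat c(v)^2 = |v|^2\,\mathrm{Id}$, and $\hat c(v)$ anticommutes with the principal symbol of $D$ for every $v$. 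I then take
\[
J := \hat c(e_1)\,\hat c(e_2)\ \in\ \Gamma(\mathrm{End}(\Lambda^* T^*X)),
\]
regarded as a bounded operator on $H$. Being a product of two parity-reversing factors, $J$ preserves $\Lambda^{\even}$, so it does act on $H$; and the Clifford relations, with $|e_i| = 1$ and $\langle e_1, e_2\rangle = 0$, give $J^* = \hat c(e_2)\hat c(e_1) = -J$ and $J^2 = -\mathrm{Id}$.

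The crux is the compactness of $[J,F]$. Since $\dim X = 4n+1$ is odd, $\hat c(\dvol)$ --- Clifford multiplication by the volume form --- lies in the center of the Clifford algebra generated by the $\hat c(v)$, so it commutes \emph{exactly} with $J$; hence $[J,F] = \hat c(\dvol)\,[J,\,D(1+D^2)^{-1/2}]$, and it suffices to see that $[J,\,D(1+D^2)^{-1/2}]$ is compact. Because $\hat c(e_i)$ anticommutes with the symbol of $D$, the anticommutator $\{D,\hat c(e_i)\}$ is of order $0$ (indeed a bundle endomorphism built from $\nabla e_i$), and a short manipulation then shows that $[D,J]$ is itself a bounded, zeroth-order operator. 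Writing $D(1+D^2)^{-1/2} = g(D)$ with $g(\lambda) = \lambda(1+\lambda^2)^{-1/2}$ --- bounded, with rapidly decaying derivatives --- a standard smooth functional calculus (e.g.\ Helffer--Sj\"ostrand) represents $[J,g(D)]$ as an absolutely convergent integral of operators of the form $(D-z)^{-1}[J,D](D-z)^{-1}$; since $X$ is closed, $(D-z)^{-1}$ has negative order and hence is compact, so $[J,g(D)]$, and therefore $[J,F]$, is compact.

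I expect the main obstacle to be exactly this compactness verification and the Clifford-algebra bookkeeping underlying it: $J$ must be parity-preserving, which forces an \emph{even} number of Clifford generators, and $J$ must commute with the symbol of $d+d^*$, which is why one uses the action $\hat c$ rather than the Clifford action assembling $d+d^*$. This is precisely where the hypothesis of \emph{two} everywhere-independent vector fields is essential: a single vector field $v$ gives only the parity-reversing $\hat c(v)$, and the sole parity-preserving operator one can form from it and $\hat c(\dvol)$ --- namely $\hat c(\dvol)\hat c(v)$, whose square is $\hat c(\dvol)^2\hat c(v)^2 = +\mathrm{Id}$ --- is an involution, not a complex structure, and so imposes no parity constraint on $\ker\tilde F$.
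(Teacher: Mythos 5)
Your proof is correct and is essentially Atiyah's argument, i.e.\ the same perturbation trick the paper reproduces in Section \ref{atiyah's original idea in our proof} for the equivariant generalization: build the complex structure $J=\hat c(V_1)\hat c(V_2)$ from the two vector fields, perturb the skew-adjoint Fredholm operator by a compact term so that it commutes with $J$, and conclude that $\dim\ker$ is even. The only cosmetic difference is that you average the bounded transform directly via $\tilde F=\tfrac12(F+JFJ^{-1})$ and invoke functional calculus for the compactness of $[J,F]$, whereas the paper perturbs the unbounded operator first (writing $T=D+B$ with $B$ of order zero) and compares bounded transforms through an explicit resolvent integral; both reduce to the same commutator estimate.
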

Is there a generalization of the above theorem to noncompact manifolds? The answer is yes when the manifold is equipped with a proper cocompact Lie group action. There are significant achievements in this direction. In \cite{mathaizhang}, for any manifold admitting a proper cocompact Lie group action, Mathai and Zhang defined the invariant index of $G$-equivariant Dirac type operators. Then, in \cite[Appendix]{mathaizhang}, Bunke provided a $KK$-theoretic interpretation to Mathai and Zhang's index. Following \cite{mathaizhang}, Tang, Yao, and Zhang studied the invariant Hodge theory. They proved in \cite{tangyaozhang} that under a proper cocompact action, the invariant de Rham cohomology is finite dimensional and isomorphic to the kernel of the deformed Laplacian. Also, they showed that when there is an invariant nonvanishing vector field on the manifold, the proper cocompact version of the Euler characteristic equals zero.

Based on the results in \cite{mathaizhang} and \cite{tangyaozhang}, in this paper, we explore the proper cocompact version of the Kervaire semi-characteristic, analytically and topologically, via $KK$-theory and invariant vector fields. 

We first introduce the topological side. Let $M$ be a $(4n+1)$-dimensional oriented smooth manifold without boundary, and $G$ be a Lie group acting properly and cocompactly on $M$ without changing the orientation. We denote by $\chi$ the modular character on $G$ such that $dg^{-1} = \chi(g)dg$ for any left-invariant Haar measure $dg$. Then, we let $H^i_{\chi^{1/2}}(M,d)$ be the $i$-th cohomology group given by the de Rham $d$ and smooth $i$-forms $\omega$ satisfying $g^*\omega = \chi(g)^{1/2}\omega$ for all $g\in G$. 
\begin{definition}\label{true kervaire}
\normalfont
    We call $$k(M,G) \coloneqq \sum_{i\text{\ is\ even}}\dim H^i_{\chi^{1/2}}(M,d)\mod 2$$ the topological Kervaire semi-characteristic of $M$. 
\end{definition}
Our first main result, the Atiyah type vanishing theorem, is stated as follows.
\begin{theorem}\label{topological version of main result and main result 1}
    If there are two $G$-invariant vector fields independent everywhere on $M$, then $k(M,G) = 0$.
\end{theorem}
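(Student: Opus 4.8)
The strategy is to push $k(M,G)$ to the analytic side and then run Atiyah's perturbation argument $G$-equivariantly. First I would invoke the proper cocompact Hodge theorem of Tang--Yao--Zhang \cite{tangyaozhang}: fixing a $G$-invariant (hence complete) Riemannian metric, each $H^i_{\chi^{1/2}}(M,d)$ is finite dimensional and isomorphic to the space $\mathcal{H}^i_{\chi^{1/2}}$ of smooth $i$-forms $\omega$ with $(d+d^*)\omega=0$ and $g^*\omega=\chi(g)^{1/2}\omega$, whence $k(M,G)=\dim_{\mathbb R}\big(\bigoplus_{i\ \mathrm{even}}\mathcal{H}^i_{\chi^{1/2}}\big)\bmod 2$. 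Next introduce the operator $\dsig:=\hat c(\dvol)\circ(d+d^*)$. In dimension $4n+1$ the Riemannian volume form is parallel, so $\hat c(\dvol)$ is self-adjoint and invertible (indeed an involution), $\dsig$ preserves form parity and anticommutes with $d+d^*$; therefore $\dsig$ restricts to a $G$-invariant, formally skew-adjoint, elliptic endomorphism $\dsig^{\mathrm{even}}$ of the even forms with $\ker\dsig^{\mathrm{even}}=\bigoplus_{i\ \mathrm{even}}\mathcal{H}^i_{\chi^{1/2}}$ once the $\chi^{1/2}$-weight is imposed. Restricting $\dsig^{\mathrm{even}}$ to the $\chi^{1/2}$-twisted $L^2$-forms in the sense of \cite{tangyaozhang} --- equivalently, viewing it as an elliptic operator over the compact quotient $M/G$ --- it becomes a genuine skew-adjoint Fredholm operator on a separable real Hilbert space, so $k(M,G)=\indtwo(\dsig^{\mathrm{even}})$. (This is the ``dimensions of kernels'' picture; the $\clifford$-valued $KK$-class of the earlier sections is the assembly-map avatar of the same number.)

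Now $\indtwo$ is unchanged by $G$-compact skew-adjoint perturbations, and more generally is locally constant along norm-continuous paths of skew-adjoint $G$-Fredholm operators (for a skew-adjoint operator the eigenvalues cross $0$ in conjugate pairs, so the parity of $\dim\ker$ is stable). So it suffices to build, out of the two everywhere independent $G$-invariant vector fields, an order-zero $G$-invariant skew-adjoint perturbation $\tsig$ such that $\dsig^{\mathrm{even}}+s\tsig$ is invertible for $s$ large (or in the rescaled limit). Following Atiyah \cite[Section 4]{atiyah2013vector}, one orthonormalizes the two vector fields with respect to the invariant metric to a $G$-invariant orthonormal pair $u_1,u_2$ and assembles $\tsig$ from Clifford multiplications by $u_1,u_2$ and $\dvol$ so that it is parity-preserving, skew-adjoint, and supplies the extra Clifford relation --- a $G$-invariant complex structure anticommuting with the leading part of $\dsig$, which a single vector field cannot produce --- that forces $\ker(\dsig^{\mathrm{even}}+s\tsig)=0$; cocompactness delivers the uniform lower bound $|u_1|^2+|u_2|^2\ge c>0$ and the uniform bounds on the zeroth-order remainder terms needed for the Bochner-type estimate, exactly as on closed manifolds. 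Because $u_1,u_2$ are $G$-invariant, $\tsig$ commutes with the $G$-action and the $\chi^{1/2}$-weight and $\tsig(1+\dsig^2)^{-1/2}$ is $G$-compact (order $-1$ over the compact quotient), so neither the Fredholm property nor $\indtwo$ changes. Hence $k(M,G)=\indtwo(\dsig^{\mathrm{even}})=\indtwo(\dsig^{\mathrm{even}}+s\tsig)=0$.

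The real obstacle is not the homological bookkeeping but the exact shape of $\tsig$ and the invertibility estimate: Atiyah's perturbation is subtler than the obvious zeroth-order Clifford terms (those leave a genuinely first-order leftover in the square of the perturbed operator), and one must reproduce his construction while tracking (i) $G$-invariance and the modular weight $\chi^{1/2}$, so that $\tsig$ is a legitimate $G$-compact perturbation of the bounded transform $\fsig$ and $\indtwo$ is literally unchanged --- which requires pinning down the correct Hilbert $C^*_r(G)$-module / real $KK$-theoretic home (the $\clifford$-bimodule picture) for the mod $2$ $G$-index and proving deformation invariance there rather than over a point --- and (ii) the fact that $M/G$ is only a compact, possibly singular, quotient, so ``elliptic $\Rightarrow$ Fredholm with compact resolvent'' and the stability of $\indtwo$ under order $-1$ perturbations must be drawn from the proper cocompact machinery of \cite{mathaizhang} and \cite{tangyaozhang} rather than from the classical closed case. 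Granting these points, the argument is Atiyah's, run equivariantly.
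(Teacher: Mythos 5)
Your reduction of $k(M,G)$ to $\ind_2$ of a skew-adjoint Fredholm operator on the $\chi^{1/2}$-twisted $L^2$-space, and your observation that $\ind_2$ is stable along norm-continuous paths of skew-adjoint Fredholm operators (eigenvalues leave $0$ in conjugate pairs), match the paper's setup. But the endgame you propose is not Atiyah's argument and, as stated, cannot be executed. You want a degree-zero, $G$-invariant, skew-adjoint $\tsig$ with $\tsig^2=-1$ \emph{anticommuting with the leading part} of $\hat c(\dvol)(d+d^*)$, so that $(\dsig^{\even}+s\tsig)^2\le -s^2+O(s)$ forces invertibility for large $s$. No such $\tsig$ comes from two vector fields: the natural complex structure $e=\hat c(V_1)\hat c(V_2)$ \emph{commutes} with the principal symbol $\hat c(\dvol)c(\xi)$ (each $\hat c(V_i)$ anticommutes with each $c(e_j)$ and with none of the $\hat c$-factors in a way that changes this), so $\{\hat c(\dvol)D,\,e\}$ is first order and the Bochner-type estimate you invoke never closes. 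Relatedly, the conclusion of Atiyah's argument is \emph{not} that the perturbed operator is invertible --- its kernel is generically nonzero --- and the ``uniform lower bound $|u_1|^2+|u_2|^2\ge c$'' is vacuous once you have normalized the fields pointwise with the invariant metric.

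What is actually needed, and what you leave unspecified (``the exact shape of $\tsig$''), is the heart of the proof. One replaces $D$ by its $e$-anticommuting part
$T=\tfrac12\bigl(D-\hat c(V_1)\hat c(V_2)\,D\,\hat c(V_2)\hat c(V_1)\bigr)$,
and computes (as in \cite[(7.10)]{wittendeformation}) that $T=D+B$ with $B$ an explicit order-zero, $G$-equivariant term built from $\nabla V_1,\nabla V_2$; this is exactly where ``$e$ commutes with the symbol'' is used, to guarantee $D-T$ is order zero. One then shows $[\hat c\circ P_tT]=[\hat c\circ P_tD]$ in $KKO(\mathbb{R},Cl_{0,1})$ by proving the difference of the bounded transforms is compact (an integral-formula estimate parallel to Lemma \ref{int representation norm convergent}; note $B$ need not be bounded against $C_0(M)$ on $\ltwoeven$, which is why the perturbation must be done at the level of $P_tD$ rather than of $[\dsig]$). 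Finally, $\hat c(V_1)\hat c(V_2)$ preserves $\ker\bigl(\hat c(\dvol)P_tT(1+(P_tT)^2)^{-1/2}\bigr)$ and squares to $-1$, so that kernel carries a complex structure and has even dimension; hence $\ind_2^G([\dsig])=0$. (After this correction your invertibility idea does become available --- $\hat c(\dvol)P_tT+s\,\hat c(V_1)\hat c(V_2)$ is invertible for $s\neq 0$ precisely because the anticommutation is now exact --- but it is a consequence of the construction you skipped, not a substitute for it.)
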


Although Definition \ref{true kervaire} and Theorem \ref{topological version of main result and main result 1} are topological, we actually take the analytic approach to obtain them. Using Kasparov's $KK$-theory (real version) and the Baum-Connes assembly map, 
we introduce (See Definition \ref{mod 2 index map generalized definition}) a generalized mod 2 index map
$$\ind_2^G: KKO^G(C_0(M),Cl_{0,1})\to KKO(\mathbb{R},Cl_{0,1}),$$
where $C_0(M)$ is the space of continuous functions on $M$ vanishing at infinity, and $Cl_{0,1}$ is the real Clifford algebra generated by $1$ and $v$ satisfying $v^2 = -1$ and $v^* = -v$. 

Back onto the manifold $M$, by assigning a $G$-invariant metric, we get the dual $d^*$ of the de Rham $d$ and let $D = d+d^*$. Given $e_1,\cdots,e_{4n+1}$ an oriented orthonormal local frames, we get an operator $\hat{c}(\dvol) = \hat{c}(e_1)\cdots\hat{c}(e_{4n+1})$ independent of the choice of oriented local orthonormal frame. Then, since $M/G$ is compact, the operator\footnote{The notation $D_\sig$ comes from ``signature''.}  $$D_\sig\coloneqq \hat{c}(\dvol)D(1+D^2)^{-1/2}:  L^2(\Lambda^{\even}T^*M)\to L^2(\Lambda^{\even}T^*M)$$on the space of even-degree $L^2$-forms on $M$ is bounded. It is also skew-adjoint because $\hat{c}(\dvol)$ anticommutes with $D$. Therefore, it defines a class $[\dsig]\in KKO^G(C_0(M),Cl_{0,1})$ (See Section \ref{elliptic section}). 
\begin{definition}\normalfont
    We call $\ind_2^G([\dsig])$ the analytic Kervaire semi-characteristic of $M$. 
\end{definition}


By carrying the procedure in \cite[Appendix E]{mathaizhang} to our situation, we interpret $\ind_2^G([\dsig])$ on a deformed space of forms for the convenience of applying functional calculus. Identifying $KKO(\mathbb{R}, Cl_{0,1})$ with $\mathbb{Z}_2$ (See Proposition \ref{technical proposition}) and using the proper cocompact Hodge theorem \cite[Theorem 1.1]{tangyaozhang}, we have the second main result: 
\begin{theorem}\label{main result 1}
The analytic Kervaire semi-charcteristic $\ind_2^G([\dsig])$ is equal to the topological Kervaire semi-characteristic $k(M,G)$. 
\end{theorem}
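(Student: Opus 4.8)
The plan is to track the abstract element $\ind_2^G([\dsig]) \in KKO(\mathbb{R},Cl_{0,1})$ down to a concrete integer mod $2$, and then recognize that integer as a sum of twisted Betti numbers. First I would invoke Proposition \ref{technical proposition} to identify $KKO(\mathbb{R},Cl_{0,1})$ with $\mathbb{Z}_2$, under which the class of a skew-adjoint Fredholm operator on a real separable Hilbert space corresponds to the dimension of its kernel modulo $2$ (this is Atiyah and Singer's $\ind_2$). Thus it suffices to exhibit one skew-adjoint Fredholm operator representing $\ind_2^G([\dsig])$ and to compute its kernel.

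Next I would carry out the deformation promised in the excerpt, adapting the procedure of \cite[Appendix E]{mathaizhang} to the real, $Cl_{0,1}$-graded setting. Using a cutoff function for the proper cocompact action, one produces a deformed de Rham operator $\widetilde{D}$ on the completion $\mathscr{H}$ of the space of smooth $\chi^{1/2}$-twisted forms, and the deformed signature-type operator $\widetilde{\mathscr{D}}_\sig := \hat{c}(\dvol)\widetilde{D}(1+\widetilde{D}^2)^{-1/2}$ on the even part $\mathscr{H}^{\even}$. Cocompactness makes $\widetilde{\mathscr{D}}_\sig$ bounded and Fredholm; one checks that $\hat{c}(\dvol)$ still anticommutes with $\widetilde{D}$, so $\widetilde{\mathscr{D}}_\sig$ is skew-adjoint; and the $KK$-machinery (the Baum--Connes assembly map underlying Definition \ref{mod 2 index map generalized definition}) identifies $[\widetilde{\mathscr{D}}_\sig]$ with the image of $[\dsig]$ under $\ind_2^G$. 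By the first step, $\ind_2^G([\dsig]) = \dim\ker \widetilde{\mathscr{D}}_\sig \pmod 2$.

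It then remains to compute $\ker\widetilde{\mathscr{D}}_\sig$. Since $\hat{c}(\dvol)$ is invertible and the bounded transform $(1+\widetilde{D}^2)^{-1/2}$ does not change kernels, $\ker\widetilde{\mathscr{D}}_\sig = \ker\widetilde{D}\cap \mathscr{H}^{\even} = \ker\widetilde{\Delta}\cap\mathscr{H}^{\even}$, where $\widetilde{\Delta} = \widetilde{D}^2$ is the deformed Laplacian. By the proper cocompact Hodge theorem \cite[Theorem 1.1]{tangyaozhang}, $\ker(\widetilde{\Delta}|_{\mathscr{H}^i}) \cong H^i_{\chi^{1/2}}(M,d)$ and each of these spaces is finite-dimensional; summing over even $i$ and reducing mod $2$ yields exactly $k(M,G)$ of Definition \ref{true kervaire}. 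This finishes the proof.

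The main obstacle is the middle step: faithfully transporting the argument of \cite[Appendix E]{mathaizhang} — originally about $K$-theory and the $\mathbb{Z}$-valued invariant index — to the real, $Cl_{0,1}$-graded, mod-$2$ situation, and checking that the cutoff deformation is compatible with the structures that make $\dsig$ skew-adjoint. Concretely, one must verify that the deformation preserves the $Cl_{0,1}$-action and the anticommutation with $\hat{c}(\dvol)$, that $\widetilde{\mathscr{D}}_\sig$ is genuinely Fredholm and not merely essentially skew-adjoint, and that the assembly map indeed sends $[\dsig]$ to $[\widetilde{\mathscr{D}}_\sig]$ — the functional-calculus bookkeeping in the equivariant real $KK$-category being the delicate part. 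By contrast, the passage from $\ker\widetilde{\Delta}$ to $\chi^{1/2}$-twisted de Rham cohomology, and hence to $k(M,G)$, is routine once \cite[Theorem 1.1]{tangyaozhang} is in hand.
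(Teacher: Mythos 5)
Your overall route coincides with the paper's: identify $KKO(\mathbb{R},Cl_{0,1})$ with $\mathbb{Z}_2$ via Proposition \ref{technical proposition}, push $[\dsig]$ through the assembly map onto a deformed space of twisted forms, and read off the kernel with the proper cocompact Hodge theorem. The genuine gap sits exactly at the point you flag as ``the delicate part'' and then leave unresolved: the assertion that the assembly map sends $[\dsig]$ to the class of $\hat{c}(\dvol)\widetilde{D}(1+\widetilde{D}^2)^{-1/2}$. What the Kasparov product with $[p_t]$ and $[\mathbbm{1}]$ actually produces, via the Grassmann connection, is the \emph{compression} $P_t\dsig P_t=\hat{c}(\dvol)P_tD(1+D^2)^{-1/2}P_t$ on $\mathbf{H}^{0,\even}_{\chi^{t/2}}(M,f)\hat{\otimes}Cl_{0,1}$, not the bounded transform of the deformed operator $P_tD=d_{t,f}+d_{t,f}^*$. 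The kernel of that compression is not a priori a harmonic space, so the Hodge theorem cannot yet be invoked. The missing ingredient is the compactness of $P_tD(1+D^2)^{-1/2}P_t-P_tD\bigl(1+(P_tD)^2\bigr)^{-1/2}$ on $\mathbf{H}^{0,\even}_{\chi^{t/2}}(M,f)$ (Lemma \ref{int representation norm convergent}), proved by writing the difference as a norm-convergent integral of resolvents and using that $(1-P_t)D$ acts on $f\omega$ as the order-zero operator $c(df-fA_t^{-1}dA_t)$, together with the Garding inequality (Theorem \ref{garding inequalities}) which makes $P_tD\bigl((P_tD)^2+1+s^2\bigr)^{-1}$ compact. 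Only after this does $\ind_2^G([\dsig])=\dim\ker\bigl(\hat{c}(\dvol)P_tD(1+(P_tD)^2)^{-1/2}\bigr)\bmod 2$ follow; your proposal asserts the conclusion of this step rather than supplying it.

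A second, smaller omission: Theorem \ref{hodge theorem in noncompact} identifies $\ker(P_tD)^2$ in degree $i$ with the \emph{twisted} cohomology $H^i_{\chi^{t/2}}(M,d_{A_t})$, not with the group $H^i_{\chi^{1/2}}(M,d)$ appearing in Definition \ref{true kervaire}. To land on the untwisted de Rham differential you must specialize to $t=1$, where $A_1$ is $G$-invariant by Lemma \ref{A is G-equivariant}, so that multiplication by $A_1$ is an isomorphism of $\Omega^*_{\chi^{1/2}}(M)$ intertwining $d$ with $d_{A_1}$; for $t\neq 1$ the analogous diagram fails to commute (see Section \ref{concluding remarks section}). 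Your write-up silently assumes the untwisted complex appears, which is precisely the $t=1$ case.
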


Once we prove Theorem \ref{main result 1}, the technique to verify Theorem \ref{topological version of main result and main result 1} is to apply a perturbation similar to the one in \cite[Section 4]{atiyah2013vector} to $D$. This perturbation does not change $\ind_2^G([\dsig])$, but clarifies the dimension (mod 2)
of kernel.




This paper is organized in the following order. In Section \ref{section index map}, we construct the generalized mod 2 index. In Section \ref{hodge theory and garding ineq section}, we give a quick survey of the Garding inequality and the Hodge theorem (proper cocompact version) in \cite{mathaizhang} and \cite{tangyaozhang}. In Sections \ref{elliptic section} and \ref{atiyah's original idea in our proof}, we prove Theorem \ref{main result 1} and then Theorem \ref{topological version of main result and main result 1}. 
In Section \ref{concluding remarks section}, we explain more about Definition \ref{true kervaire} and discuss the influences of the modular character $\chi$ of $G$. 

\ 

\noindent\textbf{Acknowledgments.} I want to sincerely thank my PhD supervisor Prof. Xiang Tang, for encouraging me to explore my very first inspirations more deeply and thoroughly, and for his many helpful suggestions throughout my learning journey in $KK$-theory. I also want to thank Prof. Aliakbar Daemi and Prof. Yanli Song for their practical comments during my first report on this paper. 


\section{Generalized mod 2 index}\label{section index map}
In this section, we define a generalized mod 2 index. We generalize Bunke's steps in \cite[Appendix B]{mathaizhang} on Mathai and Zhang's invariant index to mod 2 index, but with a more refined family of idempotents (and also family of orthogonal projections). 

As in \cite[Section 3]{tangyaozhang}, we let $Y$ be the compact stratified submanifold of $M$ such that $G\cdot Y = M$. After choosing two precompact open subsets $Y\subset U\subset U' \subset M$, we construct a bump function $f: M\to\mathbb{R}$ satisfying $f|_U = 1$ and $\supp(f )\subset U'$. 

Let $dg$ be a left-invariant Haar measure on $G$. With $\chi$ the character map satisfying $dg^{-1} = \chi(g)dg$, a family of the average of $f$ is given by 
\begin{align}\label{twisting function}
    A_t(x)\coloneqq \left(\int_{G} f(gx)^2\chi(g)^tdg\right)^{\frac{1}{2}} \ \ (\forall x\in M)
\end{align}
with a parameter $t\in\mathbb{R}$. We will later see how this $t$ affects Definition \ref{true kervaire}. 

The function $A_t$ has the following pullback property: 
\begin{lemma}\label{A is G-equivariant}
   The function $A_t: M \to\mathbb{R}$ satisfies
    $$A_t(gx) = \chi(g)^{\frac{1-t}{2}} A_t(x)$$
    for all $g\in G$ and $x\in M$. 
\end{lemma}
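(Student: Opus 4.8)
The plan is to compute $A_t(gx)$ directly from the definition \eqref{twisting function} and reduce everything to the substitution invariance of the Haar measure. First I would write
\[
A_t(gx)^2 = \int_G f(hgx)^2\,\chi(h)^t\,dg(h),
\]
where I am integrating over the variable $h\in G$, and then perform the change of variables $h = h' g^{-1}$ (equivalently $h' = hg$), so that $hgx = h'x$. The point is that $dg$ is left-invariant, not right-invariant, so this substitution does not leave $dg(h)$ alone; instead the right translation $h\mapsto h g^{-1}$ scales the left Haar measure by the modular function. I would recall (and this is the one external fact I need) that for a left-invariant Haar measure $dg$, right translation by $a$ satisfies $d(ha) = \Delta(a)^{-1}\,dh$ for the modular homomorphism $\Delta$, and that in the paper's normalization $dg^{-1} = \chi(g)\,dg$ forces $\Delta = \chi^{-1}$ (or $\chi$, depending on convention — I would pin this down carefully against the paper's stated relation $dg^{-1}=\chi(g)dg$ so the exponent comes out right). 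The cleanest route is probably to avoid quoting the modular function abstractly and instead manipulate $dg^{-1}=\chi(g)dg$ together with left-invariance, since those are the only inputs the paper gives.

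Carrying this out: after the substitution $h = h'g^{-1}$ the integrand $f(hgx)^2$ becomes $f(h'x)^2$, the factor $\chi(h)^t$ becomes $\chi(h'g^{-1})^t = \chi(h')^t\chi(g)^{-t}$ using that $\chi$ is a homomorphism, and the measure $dg(h)$ picks up a factor $\chi(g)$ (this is exactly where the convention $dg^{-1}=\chi(g)\,dg$ enters — right multiplication by $g^{-1}$ contributes $\chi(g)$). Collecting the two $\chi(g)$ contributions gives
\[
A_t(gx)^2 = \chi(g)^{-t}\,\chi(g)\int_G f(h'x)^2\,\chi(h')^t\,dg(h') = \chi(g)^{1-t}\,A_t(x)^2.
\]
Taking the positive square root yields $A_t(gx) = \chi(g)^{(1-t)/2}A_t(x)$, which is the claim. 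I should note that $A_t$ is real and nonnegative (it is defined as a square root of a nonnegative integral — and the integral is finite because $f$ has compact support and the action is proper, so the integrand is compactly supported in $g$), so taking the square root is legitimate and preserves the sign.

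The only genuine obstacle is bookkeeping the modular factor correctly: there are two places a factor of $\chi(g)^{\pm 1}$ can appear — from the homomorphism property applied to $\chi(h'g^{-1})^t$, and from the change-of-variables Jacobian of the Haar measure under right translation — and it is easy to get a sign or an inverse wrong, in which case one would land on $\chi(g)^{(1+t)/2}$ or $\chi(g)^{-(1-t)/2}$ instead. I would therefore sanity-check the formula at $t=1$: there $A_1(x)^2 = \int_G f(gx)^2\chi(g)\,dg$ should be genuinely $G$-invariant, i.e. $A_1(gx)=A_1(x)$, which matches the claimed exponent $(1-t)/2 = 0$; and at $t=0$, $A_0(x)^2=\int_G f(gx)^2\,dg$ should transform with $\chi(g)$, matching exponent $1/2$. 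Both checks are consistent with $\chi(g)^{(1-t)/2}$, so that is the formula to aim for, and the proof is just the careful substitution above.
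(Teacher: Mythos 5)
Your proposal is correct and follows essentially the same route as the paper: a direct change of variables $h\mapsto hg$ in the defining integral, with the factor $\chi(g)^{-t}$ from the homomorphism property and $\chi(g)$ from the right-translation of the left Haar measure combining to give $\chi(g)^{1-t}$, then taking the nonnegative square root. The sanity checks at $t=0$ and $t=1$ are a nice addition but the computation itself is exactly the paper's one-line proof.
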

\begin{proof}
    $A_t(gx)^2 = \displaystyle\int_G f(hgx)^2\chi(h)^tdh = \int_G f(hx)^2\chi(h)^t\chi(g)^{1-t}dh = \chi(g)^{1-t}A_t(x)^2$.  
\end{proof}

With $f$ and $A_t$, we then define the generalized mod 2 index map in $KK$-theory.
\begin{remark}\normalfont
\begin{enumerate}[label = (\arabic*)]
    \item All $C^*$-algebras in this paper are real. Thus, we study real $KK$-groups, denoted by $KKO$. For more properties of real $C^*$-algebras, see \cite{rosenberg2015structureapplicationsrealcalgebras}.
    
    \item As mentioned in \cite[Part 1, Section 12]{baumkknotes}, there are graded $KK^1$ and ungraded $KK^0$, with the interior Kasparov product $KK^i\otimes KK^j\to KK^{i+j}$. However, in this paper, we always take the $\mathbb{Z}_2$-grading into account, and the interior Kasparov product is defined as in \cite{GGKasparov_1981} and \cite{Kasparovequivariant}. 
\end{enumerate}
\end{remark}

We begin with the Baum-Connes assembly map for the skew-adjoint situation. Recall that $Cl_{0,1}$ is the real Clifford algebra generated by $1$ and $v$ under the relations 
$v^2 = -1$ and $v^* = -v.$ The grading on $Cl_{0,1}$ is given by $r+sv\mapsto r-sv$. 

According to \cite[Section 3.11]{Kasparovequivariant}, we have the descent map
$$j^G: KKO^G(C_0(M),\clifford)\to KKO(C^*(G,C_0(M)), C^*(G,\clifford)).$$
Then, with the bump function $f$, we define a family $p_t\in C^*(G, C_0(M))$ by 
$$p_t(g,x) = \dfrac{f(x)}{(A_t(x))^2}\chi(g)^{1-\frac{t}{2}}f(g^{-1}x)$$
for all $g\in G$ and $x\in M$. 
\begin{proposition}
    The family $p_t$ $(t\in\mathbb{R})$ satisfies $p_t^2 = p_t^* = p_t$. 
\end{proposition}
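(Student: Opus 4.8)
The plan is to carry out the verification in the dense $*$-subalgebra $C_c(G,C_0(M))\subset C^*(G,C_0(M))$ associated to the $G$-action $(g\cdot\phi)(x)=\phi(g^{-1}x)$, where the convolution product and involution, in the normalization $dg^{-1}=\chi(g)\,dg$, read $(F_1*F_2)(g,x)=\int_G F_1(h,x)\,F_2(h^{-1}g,h^{-1}x)\,dh$ and $F^*(g,x)=\chi(g)\,F(g^{-1},g^{-1}x)$. Before the two identities I would first observe that $p_t$ genuinely lies in this subalgebra: the factor $f(x)$ confines $x$ to the precompact set $\supp(f)$, the factor $f(g^{-1}x)$ then confines $g$ to a precompact subset of $G$ by properness of the action, and $A_t(x)>0$ for every $x\in M$ because $f|_U=1$ with $U\supset Y$ and $G\cdot Y=M$, so $x\mapsto f(x)/A_t(x)^2$ is continuous with compact support and $p_t\in C_c(G\times M)$.

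For self-adjointness I would substitute $(g^{-1},g^{-1}x)$ into the defining formula, note that $f\!\left((g^{-1})^{-1}g^{-1}x\right)=f(x)$, and rewrite $A_t(g^{-1}x)^{-2}=\chi(g)^{1-t}A_t(x)^{-2}$ using Lemma \ref{A is G-equivariant} with $g^{-1}$ in place of $g$. The powers of $\chi$ that occur — the explicit $\chi(g^{-1})^{1-t/2}$ coming from $p_t$, the $\chi(g)^{1-t}$ coming from $A_t$, and the $\chi(g)$ coming from the involution — then telescope to exactly $\chi(g)^{1-t/2}$, giving $p_t^*(g,x)=p_t(g,x)$. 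The exponent $1-\tfrac{t}{2}$ and the division by $A_t^2$ in the definition of $p_t$ are precisely what makes this cancellation work.

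For idempotency I would expand $(p_t*p_t)(g,x)=\int_G p_t(h,x)\,p_t(h^{-1}g,h^{-1}x)\,dh$, use $(h^{-1}g)^{-1}h^{-1}x=g^{-1}x$, and observe that the factors $f(x)$, $f(g^{-1}x)$, $A_t(x)^{-2}$ and $\chi(g)^{1-t/2}$ pull out of the integral as exactly $p_t(g,x)$, leaving the scalar $\int_G f(h^{-1}x)^2/A_t(h^{-1}x)^2\,dh$, so the claim $p_t^2=p_t$ reduces to showing this integral equals $1$. To evaluate it I would again use Lemma \ref{A is G-equivariant} to replace $A_t(h^{-1}x)^{-2}$ by $\chi(h)^{1-t}A_t(x)^{-2}$, pull $A_t(x)^{-2}$ out, and apply the inversion rule $\int_G\psi(h^{-1})\,dh=\int_G\psi(h)\chi(h)\,dh$ with $\psi(h)=f(hx)^2\chi(h)^{-(1-t)}$; this turns $\int_G f(h^{-1}x)^2\chi(h)^{1-t}\,dh$ into $\int_G f(hx)^2\chi(h)^t\,dh=A_t(x)^2$, which cancels $A_t(x)^{-2}$ and yields $1$. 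This normalization step — that the $\chi$-twisted average of $f^2$ is exactly $A_t^2$ — is the heart of the argument; it is the crossed-product analogue of a squared partition of unity summing to one, and $A_t$ is by construction the normalizing factor that makes it hold.

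The step I expect to require the most care is the bookkeeping of the modular function: ensuring that the powers of $\chi$ appearing in the crossed-product involution, in Lemma \ref{A is G-equivariant}, and in the change of variables $h\mapsto h^{-1}$ are all consistent with the single normalization $dg^{-1}=\chi(g)\,dg$, so that they cancel exactly rather than leaving a residual factor of $\chi$. Once the conventions are pinned down, both $p_t^*=p_t$ and $p_t^2=p_t$ follow from the direct computations above.
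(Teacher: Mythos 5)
Your proof is correct and follows essentially the same route as the paper's: expand the convolution, cancel the $\chi(h)$-powers, and use Lemma \ref{A is G-equivariant} together with the substitution $h\mapsto h^{-1}$ (i.e.\ $dh^{-1}=\chi(h)\,dh$) to recognize the remaining integral as $A_t(x)^2$; the self-adjointness bookkeeping also matches. Your added observations that $A_t>0$ everywhere (since $G\cdot Y=M$ and $f|_U=1$) and that $p_t\in C_c(G\times M)$ by properness are correct refinements the paper leaves implicit.
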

\begin{proof}
    This follows from the definition of crossed product (See \cite[II. Appendix C]{connes1995noncommutative}):  
    \begin{align*}
        p^2_t(g,x) & = \int_G p_t(h,x) p_t(h^{-1}g,h^{-1}x)dh\\
        & = \int_G \dfrac{f(x)}{(A_t(x))^2}\chi(h)^{1-\frac{t}{2}}f(h^{-1}x)\dfrac{f(h^{-1}x)}{(A_t(h^{-1}x))^2}\chi(h^{-1}g)^{1-\frac{t}{2}}f(g^{-1}x)dh\\
        & = \dfrac{f(x)\chi(g)^{1-\frac{t}{2}}f(g^{-1}x)}{A_t(x)^4}\int_Gf(h^{-1}x)^2\chi(h)^{1-t}dh\ \ \ (\text{by\ Lemma \ref{A is G-equivariant}}) \\
        & = p_t(g,x) \ \ \ (\text{by\ the\ definition\ of\ }A_t),
    \end{align*}
    and $p_t^*(g,x) = \chi(g)p(g^{-1},g^{-1}x) = p_t(g,x)$. 
\end{proof}
\begin{remark}\normalfont
    In Bunke's construction, there is a more strict restriction on $f$, so the idempotent family defined in \cite[Appendix B]{mathaizhang} is self-adjoint only when $t = 1$. However, for our $f$, we can make $p_t$ self-adjoint for all $t$.
\end{remark}

Following Lafforgue's interpretation (See \cite{Lafforgue}), we look at the following data: 
\begin{enumerate}[label = (\arabic*)]
    \item A projective $C^*(G,C_0(M))$-module $p_tC^*(G,C_0(M))$ with the trivial grading;
    \item The representation $\rho$ of $\mathbb{R}$ on $p_t C^*(G,C_0(M))$ is the scalar multiplication;
    \item The zero operator $0$ on $p_tC^*(G,C_0(M))$.
\end{enumerate}
Then, we get a class $[p_t]\in KKO(\mathbb{R},C^*(G,C_0(M)))$ which is determined by the triple $(p_tC^*(G, C_0(M)),\rho,0)$.
\begin{remark}\label{independent of choice of bump function}
\normalfont
     The class $[p_t]$ is always independent of $t$. Also, it is independent of the stratified submanifold $Y$, the neighborhoods $U, U'$, and the bump function $f$.
    Similar to \cite[Appendix B]{mathaizhang}, for different $(f, A_t)$ and $(\tilde{f},\tilde{A}_t)$, we have a path $$\gamma(s) = \sqrt{sf^2A_1^{-2}+(1-s)\tilde{f}^2\tilde{A}_1^{-2}}\ \ (0\leq s\leq 1),$$
    and then $p_1(s) \coloneqq \gamma(s) (g^{-1})^*(\gamma(s))\chi(g)^{1/2}$
    gives a homotopy between $[p_1]$ and $[\tilde{p}_1]$. 
\end{remark}
Thus, by Remark \ref{independent of choice of bump function}, the class $[p_t]$ is intrinsically defined. 
Recall the Kasparov descent map $j^G$ (See \cite[Section 3.11]{Kasparovequivariant}) and the interior Kasparov product $\hat{\otimes}_{\text{any\ }C^*\text{-algebra}}$, we state the Baum-Connes assembly map for our $Cl_{0,1}$ situation. 
\begin{definition}\normalfont
    The mod 2 Baum-Connes assembly map $$\mu: KKO^G(C_0(M),\clifford)\to KKO(\mathbb{R},C^*(G,\clifford))$$ is the composition of the Kasparov descent map
    $$j^G: KKO^G(C_0(M),\clifford)\to KKO(C^*(G,C_0(M)), C^*(G,\clifford))$$
    and 
    $$KKO(C^*(G,C_0(M)), C^*(G,\clifford))\xrightarrow{[p_t]\hat{\otimes}_{C^*(G,C_0(M))}\ \cdots\ } KKO(\mathbb{R},C^*(G,\clifford)).$$
\end{definition}

With this mod 2 assembly map $\mu$, we can go one step further to figure out a $\mathbb{Z}_2$-value as the generalized mod 2 index. We let
$[\mathbbm{1}]\in KKO(C^*(G),\mathbb{R})$ be the class
defined by: 
\begin{enumerate}[label = (\arabic*)]
    \item The right $\mathbb{R}$-module $\mathbb{R}$ with the trivial grading;
    \item The representation of $C^*(G)$ on $\mathbb{R}$\ is determined by (See \cite[Section VII.1]{davidson1996c}):
    $$\forall f\in C_c(G) \text{\ and\ } r\in\mathbb{R},\ f\cdot r \coloneqq \left(\int_Gfdg\right)r.$$
    \item The zero operator $0: \mathbb{R}\to \mathbb{R}$.
\end{enumerate}
The definition of the generalized mod 2 index follows immediately.
\begin{definition}\label{mod 2 index map generalized definition}
\normalfont
    The generalized mod 2 index is given by 
    \begin{align*}
        \indtwo: KKO^G(C_0(M),\clifford)&\to KKO(\mathbb{R}, \clifford){\cong}\mathbb{Z}_2\\
        \text{\normalfont any\ class}&\mapsto \mu(\text{\normalfont any\ class})\hat{\otimes}_{C^*(G)}[\mathbbm{1}].
   \end{align*}
\end{definition}

The isomorphism $KKO(\mathbb{R}, \clifford){\cong}\mathbb{Z}_2$ is almost a rephrase of Atiyah and Singer's proof of \cite[Theorem A]{atiyahsingerskewadjoint}, but with grading as an extra requirement: 
    \begin{proposition}\label{technical proposition}
Let $(\mathcal{V}, \rho, \Psi)$ be a triple representing a class in $KKO(\mathbb{R},Cl_{0,1})$, where $\mathcal{V}$ is a graded right Hilbert $Cl_{0,1}$-module, $\rho$ is a graded $C^*$-representation of $\mathbb{R}$ on $\mathcal{V}$, and $\Psi$ is a bounded adjointable operator of degree $1$ on $\mathcal{V}$. Then, the map 
\begin{align*}
        \delta: KKO(\mathbb{R},Cl_{0,1})&\to\mathbb{Z}_2\\
        (\mathcal{V}, \rho, \Psi)&\mapsto\dfrac{1}{2}\dim\ker\left(\rho(1)(\Psi+\Psi^*)\rho(1)\right) \mod 2. 
    \end{align*}
    is an isomorphism. 
\end{proposition}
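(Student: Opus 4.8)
The plan is to translate a Kasparov $Cl_{0,1}$-cycle over $\mathbb{R}$ into a real skew-adjoint Fredholm operator and then to re-run, in this language, Atiyah and Singer's determination of the path components of the space of such operators \cite{atiyahsingerskewadjoint}. The first step is a normal form for a representative $(\mathcal{V},\rho,\Psi)$. Because $\rho(1)$ is a projection, the quantity $\dim\ker\big(\rho(1)(\Psi+\Psi^*)\rho(1)\big)$ only involves the compression to $\rho(1)\mathcal{V}$, so I may pass to the compressed cycle on $\rho(1)\mathcal{V}$ --- by the standard reduction to a nondegenerate representation this represents the same $KK$-class --- and then the formula is $\tfrac12\dim\ker(\Psi+\Psi^*)\bmod 2$ with $\rho$ unital. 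Right multiplication by the Clifford generator $v$ is an odd adjointable unitary with square $-1$, so it identifies $\mathcal{V}^0\cong\mathcal{V}^1$ and exhibits $\mathcal{V}\cong\mathcal{V}^0\otimes_{\mathbb{R}}Cl_{0,1}$ for a trivially graded real Hilbert space $\mathcal{V}^0$; accordingly a $Cl_{0,1}$-linear operator has the form $a\otimes 1+b\otimes(\text{left multiplication by }v)$ with $a,b\in\mathcal{L}_{\mathbb{R}}(\mathcal{V}^0)$, and oddness forces $a=0$. Thus $\Psi=b\otimes(\text{left mult.\ by }v)$ for a single real operator $b$; the Kasparov conditions $\rho(1)(\Psi-\Psi^*),\rho(1)(\Psi^2-1)\in\mathcal{K}$ read $b+b^*\in\mathcal{K}(\mathcal{V}^0)$ and $b^2+1\in\mathcal{K}(\mathcal{V}^0)$, so after subtracting a compact I may take $b$ to be a genuine skew-adjoint Fredholm operator on $\mathcal{V}^0$; then a short computation gives $\tfrac12\dim_{\mathbb{R}}\ker(\Psi+\Psi^*)=\dim_{\mathbb{R}}\ker b$, so $\delta([\mathcal{V},\rho,\Psi])=\dim_{\mathbb{R}}\ker b\bmod 2$ is exactly the Atiyah--Singer mod $2$ index of $b$.

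Next I would check that $\delta$ is well defined on $KKO(\mathbb{R},Cl_{0,1})$ by verifying invariance under the generating moves of the $KK$-equivalence relation (unitary equivalence, addition and deletion of degenerate cycles, operator homotopy). Unitary equivalence is immediate, and $\delta$ is additive under direct sums. A degenerate cycle gives, after the normal form, an operator $b$ with $b^*=-b$ and $b^2=-1$, i.e.\ an orthogonal complex structure, so $\ker b=0$ and $\delta=0$. For an operator homotopy $(\mathcal{V},\rho,\Psi_t)$ one obtains a norm-continuous path $b_t$ of skew-adjoint Fredholm operators on a fixed real Hilbert space; since $0$ sits in a spectral gap, near any parameter value the Riesz projection $P_t$ of $b_t$ onto a small disc about $0$ is norm-continuous of constant finite rank, $\ker b_t=\ker(b_t|_{P_t\mathcal{V}^0})$, and $b_t|_{P_t\mathcal{V}^0}$ is a real skew-adjoint operator on a fixed finite-dimensional real space, whose kernel has even codimension; hence $\dim_{\mathbb{R}}\ker b_t\equiv\dim_{\mathbb{R}}P_t\mathcal{V}^0\pmod 2$ is locally constant, so constant. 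Therefore $\delta$ descends to a group homomorphism $KKO(\mathbb{R},Cl_{0,1})\to\mathbb{Z}_2$.

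It remains to show $\delta$ is bijective. For surjectivity, take $\mathcal{V}=Cl_{0,1}$ (so $\mathcal{V}^0=\mathbb{R}$), $\rho$ unital, $\Psi=0$: then $\Psi+\Psi^*=0$ and $\delta=\tfrac12\dim_{\mathbb{R}}Cl_{0,1}=1$. For injectivity I would prove $\ker\delta=0$: if $\delta([\mathcal{V},\rho,\Psi])=0$ then $\dim_{\mathbb{R}}\ker b$ is even, so adding an orthogonal complex structure supported on $\ker b$ (a finite-rank skew-adjoint perturbation) homotopes $b$ through skew-adjoint Fredholm operators to an invertible one; polar decomposition $b=J|b|$ then yields the linear homotopy $s\mapsto J((1-s)|b|+s)$ through invertible skew-adjoint operators from $b$ to the orthogonal complex structure $J$, and the cycle attached to $J$ is degenerate, so the class is $0$. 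Together with the previous paragraph this makes $\delta$ an isomorphism onto $\mathbb{Z}_2$ (and re-derives $KKO(\mathbb{R},Cl_{0,1})\cong\mathbb{Z}_2$ as a byproduct). Alternatively, if one simply quotes the known value $KKO(\mathbb{R},Cl_{0,1})\cong\mathbb{Z}_2$, then well-definedness together with the generator exhibited above already forces $\delta$ to be an isomorphism, and the injectivity argument may be omitted.

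The one genuinely delicate point is the well-definedness, and within it the homotopy invariance of $\dim_{\mathbb{R}}\ker b\bmod 2$: for an arbitrary path of (skew-)adjoint real Fredholm operators the parity of $\dim\ker$ need not be locally constant, and what rescues it is that $b$ is a \emph{real} skew-adjoint operator, which forces the kernel of its finite-dimensional near-zero spectral part to have even codimension. This is precisely where the grading hypothesis --- the only feature separating this statement from the ungraded result of \cite{atiyahsingerskewadjoint} --- is used; the module-theoretic algebra of the first paragraph and the deformations of the third are bookkeeping in the spirit of that paper.
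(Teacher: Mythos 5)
Your argument is correct, and it follows precisely the route the paper itself indicates: the paper offers no written proof beyond the remark that the isomorphism is ``almost a rephrase'' of Atiyah--Singer's Theorem A with the grading taken into account, and your reduction of a graded $Cl_{0,1}$-cycle to a single real skew-adjoint Fredholm operator $b$ with $\delta=\dim_{\mathbb{R}}\ker b \bmod 2$, followed by the Riesz-projection homotopy-invariance argument and the complex-structure trick for injectivity, is a faithful and complete execution of exactly that plan. The only point worth noting is that you supply the details (well-definedness on the non-unital part of $\rho$, the normal form $\Psi=b\otimes v$, and the parity argument for real skew-adjoint finite-dimensional blocks) that the paper leaves entirely to the citation.
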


The image of $\delta$ is exactly Atiyah and Singer's mod 2 index in the case of graded separable Hilbert spaces, and thus $\ind_2^G$ is a reasonable generalization of Atiyah and Singer's $\ind_2$. 


\begin{remark}\normalfont
    According to \cite{roetwoassemblymaps}, for the Fredholm index, there is a coarse algebraic approach to generalize it. Thus, for the mod 2 index map, we may also expect a coarse version. For this direction, see \cite{Bourne_2022}. 
\end{remark}

\section{Garding inequality and Hodge theory}\label{hodge theory and garding ineq section}
In this section, we discuss the operator $T = D+B$, where $D = d+d^*$, and $B$ is an order 0 differential operator exchanges odd and even forms. Moreover, we assume that $B$ is $G$-equivariant. 
We present Mathai and Zhang's Garding inequality of $T$, and Tang, Yao, Zhang's Hodge theorem for $D$. 

According to \cite[Theorem 2.1]{bredon1972introductionliegroupaction}, we assign $M$ a $G$-invariant Riemannian metric $\langle\cdot,\cdot\rangle$. Then, we get the formal adjoint $d^*$ of $d$, the Levi-Civita connection $\nabla$, and the volume form $\dvol$ of $M$.

Next, with the bump function $f$ from Section \ref{section index map}, we define the space. 
$$\Omega_{\chi^{t/2}}^i(M)\coloneqq \{f\cdot\omega: \omega\text{\ is\ a\ smooth\ }i\text{-form\ satisfying\ }g^*\omega = \chi(g)^{t/2}\omega\}.$$ 
Here, our pullback convention is the same as \cite[Chapter 14]{lee2012introduction}, but different from \cite[Appendix]{mathaizhang}. 
Using the Levi-Civita connection $\nabla$, for any $s$, we define the Sobolev $s$-norm  \cite[Definition 4.6]{handbookofglobalanalysis} on the space of differential forms by
$$\|\alpha\|_{s}^2\coloneqq \sum_{j = 0}^s \int_M \langle\nabla^j\alpha, \nabla^j\alpha\rangle \dvol.$$
Then, we let $\mathbf{H }^{s,i}_{\chi^{t/2}}(M,f)$ be the $\|\cdot\|_s$-completion of $f\cdot\Omega^i_\inv(M)$. Respectively, we have $\mathbf{H}^{s,\text{even}}_{\chi^{t/2}}(M,f)$ and $\mathbf{H}^{s}_{\chi^{t/2}}(M,f)$ for even-degree forms and all-degree forms.

Let $L^2(\Lambda^*T^*M)$ be the space of all $L^2$-forms on $M$, and $P_t: L^2(\Lambda^*T^*M)\to L^2(\Lambda^*T^*M)$ be the orthogonal projection onto 
$\mathbf{H }^{s}_{\chi^{t/2}}(M,f)$. We have \cite[Proposition 2.1]{mathaizhang}, the Garding type inequality (or, unbounded Fredholm property): 
\begin{theorem}\label{garding inequalities}
For each $t$, the operator $P_t T: \mathbf{H}^{0}_{\chi^{t/2}}(M,f)\to \mathbf{H}^{0}_{\chi^{t/2}}(M,f)$ is densely defined on $\mathbf{H}^{1}_{\chi^{t/2}}(M,f)$, and there are constants $C_1>0$ and $C_2>0$ satisfying
$$\left\|P_tT(f\eta)\right\|_0\geq C_1\|f\eta\|_1-C_2\|f\eta\|_0 $$
$\text{\ for\ all\ }f\eta\in \mathbf{H}^{1}_{\chi^{t/2}}(M,f)$. Thus, the kernel of $P_tT$ is finite dimensional. 
\end{theorem}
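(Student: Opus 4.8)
The plan is to reduce the statement to the classical elliptic Gårding inequality on a precompact piece of $M$, using the $G$-equivariance of $T$ and the cocompactness of the action to transport local estimates across orbits. First I would observe that $T = D+B$ has the same principal symbol as $D = d+d^*$, which is elliptic, so $T$ is an elliptic first-order operator; in particular, on any precompact open set $V$ with $\overline{V}$ compact, the standard interior elliptic estimate gives a constant $C_V$ with $\|\alpha\|_{1,V} \le C_V(\|T\alpha\|_{0,V'} + \|\alpha\|_{0,V'})$ for a slightly larger $V'$. The point of the bump function $f$ is that every element of $\mathbf{H}^{1}_{\chi^{t/2}}(M,f)$ is of the form $f\eta$ with $\eta$ an ($L^2$-type) form satisfying the twisted equivariance $g^*\eta = \chi(g)^{t/2}\eta$, so $f\eta$ is supported in the precompact set $U' = \supp(f)$ where such a local elliptic estimate is available.

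Next I would handle the projection $P_t$. The subtlety is that $P_tT(f\eta)$ is not $T(f\eta)$ but its orthogonal projection onto $\mathbf{H}^s_{\chi^{t/2}}(M,f)$; however, since $P_t$ is an orthogonal projection, we already have $\|P_tT(f\eta)\|_0 \le \|T(f\eta)\|_0$ for free, and we instead need a lower bound. The key algebraic fact to exploit is that $f\eta \in \mathbf{H}^1_{\chi^{t/2}}(M,f)$ already lies in the subspace onto which $P_t$ projects, so $\langle P_tT(f\eta), f\eta\rangle = \langle T(f\eta), f\eta\rangle$, and more usefully, pairing $P_tT(f\eta)$ against elements of $f\cdot\Omega^i_{\inv}(M)$ recovers the same pairings as $T(f\eta)$. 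So I would set up the estimate as follows: by the local elliptic inequality on $U'$,
$$\|f\eta\|_1 \le C_V\bigl(\|T(f\eta)\|_0 + \|f\eta\|_0\bigr),$$
and then argue that $\|T(f\eta)\|_0$ can be replaced by $\|P_tT(f\eta)\|_0$ up to lower-order terms. This replacement is where I expect to lean on the structure of \cite[Proposition 2.1]{mathaizhang}: one writes $T(f\eta) = P_tT(f\eta) + (1-P_t)T(f\eta)$ and shows the second piece is controlled by $\|f\eta\|_0$, using that $(1-P_t)$ annihilates $f\cdot\Omega^\bullet_{\inv}$ together with the fact that the commutator of $T$ with multiplication by $f$ (equivalently, the failure of $T(f\eta)$ to be twisted-equivariant) is an order-zero operator supported in $U'$. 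Rearranging gives $\|P_tT(f\eta)\|_0 \ge C_1\|f\eta\|_1 - C_2\|f\eta\|_0$.

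The main obstacle, and the step deserving the most care, is precisely this control of $(1-P_t)T(f\eta)$ — i.e., showing that projecting away from the twisted-invariant forms costs only a zeroth-order (hence lower-order relative to the $\|\cdot\|_1$ term) error. This is what makes the bare elliptic estimate survive the insertion of $P_t$; morally it works because $d$ and $d^*$ preserve the equivariance condition $g^*\omega = \chi(g)^{t/2}\omega$ while the bump function $f$ does not, and the discrepancy is an explicit compactly supported multiplication operator. Once the inequality is established, the finite-dimensionality of $\ker P_tT$ is immediate: on the kernel the inequality reads $\|f\eta\|_1 \le (C_2/C_1)\|f\eta\|_0$, so the $\|\cdot\|_0$-unit ball of the kernel is bounded in $\|\cdot\|_1$, hence precompact in $\|\cdot\|_0$ by Rellich on $U'$ (the relevant forms being supported in the precompact set $U'$), forcing the kernel to be finite dimensional.
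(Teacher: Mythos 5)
The paper does not actually prove this statement: it is imported from \cite[Proposition 2.1]{mathaizhang} and \cite{tangyaozhang}, with only a remark that the case of general $t$ ``follows the same steps'' as $t=0$. Your sketch is a correct reconstruction of that standard argument and follows essentially the same route as the cited sources: localize via the compact support of $f\eta$ in $U'$, apply the interior elliptic estimate for the first-order elliptic operator $T$ (whose principal symbol is that of $d+d^*$), and then absorb the discrepancy between $T(f\eta)$ and $P_tT(f\eta)$ into the zeroth-order term. The mechanism you identify for the last step is exactly the one the paper relies on elsewhere: by Lemma \ref{orthogonal project composition with the dirac} one has the explicit formula $(1-P_t)T(f\eta)=c\bigl(df-fA_t^{-1}dA_t\bigr)\eta$, a Clifford multiplication by a fixed compactly supported $1$-form applied to $\eta$. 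The one step you should carry out rather than gesture at is the bound $\|c(df-fA_t^{-1}dA_t)\eta\|_0\le C\|f\eta\|_0$: the left side involves $\|\eta\|_{L^2(U')}$ while the right side only sees $\eta$ weighted by $f$, so you must use the twisted equivariance $g^*\eta=\chi(g)^{t/2}\eta$ together with properness and the fact that finitely many $G$-translates of $U$ (where $f\equiv 1$) cover $\overline{U'}$ to dominate $\|\eta\|_{L^2(U')}$ by a constant times $\|\eta\|_{L^2(U)}\le\|f\eta\|_0$. With that inserted, the rearrangement giving $\|P_tT(f\eta)\|_0\ge C_1\|f\eta\|_1-C_2\|f\eta\|_0$ and the Rellich argument for finite-dimensionality of the kernel are both correct.
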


\begin{remark}\normalfont
    According to Bunke's construction in \cite[Appendix B]{mathaizhang}, the space $\mathbf{H}^{s,i}_{\chi^{t/2}}(M,f)$ is actually the image of all locally $\|\cdot\|_s$-integrable $i$-forms $\eta$ satisfying $g^*\eta = \chi^{t/2}(g)\eta$ under the map $\eta\mapsto f\eta$. This is an injective map with a closed image. 
\end{remark}

The expression of $P_t$ has been given in \cite[Appendix D]{mathaizhang} and \cite[Proposition 3.1]{tangyaozhang} for $t = 1$ and $0$ respectively. We refine them into the continuous family $P_t$, which is 
$$P_t(\omega)(x) = \dfrac{f(x)}{A_t(x)^2}\int_G\chi(g)^{\frac{t}{2}}f(gx)\omega(gx)dg,\ \forall \omega\in L^2(\Lambda^*T^*M), x\in M.$$
This $P_t$ is an orthogonal projection from $L^2(\Lambda^*T^*M)$ onto $\mathbf{H}_{\chi^{t/2}}^{0}(M,f)$: 
\begin{lemma}\label{orthogonal projection}
The map 
    $P_t: L^2(\Lambda^*T^*M)\to L^2(\Lambda^*T^*M)$ is an identity map when restricted to $\mathbf{H}_{\chi^{t/2}}^{0}(M,f)$. Also, it satisfies $P_t^* = P_t = P_t^2$, and $g^*(P_t(\omega)/f) = \chi(g)^{t/2}\omega$, $\forall g\in G, \omega\in L^2(\Lambda^*T^*M)$. 
\end{lemma}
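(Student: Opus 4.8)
The plan is to verify the three claims — that $P_t$ is the identity on $\mathbf{H}^0_{\chi^{t/2}}(M,f)$, that $P_t^2=P_t=P_t^*$, and the equivariance of $P_t\omega/f$ — treating the equivariance as the organizing step. Writing $\eta := P_t\omega/f$, so that $\eta(x) = A_t(x)^{-2}\int_G \chi(g)^{t/2}f(gx)\,\omega(gx)\,dg$, I would first compute $\eta(g_0x)$: Lemma \ref{A is G-equivariant} gives $A_t(g_0x)^{-2} = \chi(g_0)^{t-1}A_t(x)^{-2}$, and substituting $h = gg_0$ in the Haar integral turns $\chi(g)^{t/2}$ into $\chi(h)^{t/2}\chi(g_0)^{-t/2}$ while the measure contributes another power of $\chi(g_0)$ (the same modular identity $dg^{-1}=\chi(g)\,dg$ used implicitly in Lemma \ref{A is G-equivariant}). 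Collecting exponents of $\chi(g_0)$ yields exactly $\chi(g_0)^{t/2}$, so $g_0^*\eta = \chi(g_0)^{t/2}\eta$; in particular $P_t\omega = f\eta$ lies in $\mathbf{H}^0_{\chi^{t/2}}(M,f)$ as characterized in the remark following Theorem \ref{garding inequalities}.

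Next, for the identity claim I would feed a generator $f\eta$ of $\mathbf{H}^0_{\chi^{t/2}}(M,f)$, so that $g^*\eta=\chi(g)^{t/2}\eta$, into the formula. Because $G$ acts by orientation-preserving isometries, the pullback of $f\eta$ along $g$, evaluated at $x$, equals $f(gx)\chi(g)^{t/2}\eta(x)$, so $P_t(f\eta)(x) = f(x)A_t(x)^{-2}\eta(x)\int_G \chi(g)^t f(gx)^2\,dg = f(x)\eta(x)$ by the definition of $A_t$ in \eqref{twisting function}. Thus $P_t$ fixes the dense subspace $f\cdot\Omega^*_{\chi^{t/2}}(M)$; once boundedness of $P_t$ on $L^2$ is available this extends to the whole completion. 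Then $P_t^2=P_t$ is immediate: the first step puts $P_t\omega$ inside $\mathbf{H}^0_{\chi^{t/2}}(M,f)$, which the second step shows $P_t$ leaves pointwise fixed.

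For self-adjointness I would expand $\langle P_t\omega,\omega'\rangle_{L^2}$ as an iterated integral over $M\times G$, interchange the order (justified by the same integrability that gives boundedness), and for each fixed $g$ substitute $x\mapsto g^{-1}x$. Invariance of $\dvol$ and of the fibrewise inner products under the isometry $g$ moves $\omega$ out of the $g$-pullback and $\omega'$ into the $g^{-1}$-pullback, Lemma \ref{A is G-equivariant} supplies $A_t(g^{-1}x)^2 = \chi(g)^{t-1}A_t(x)^2$, and a final substitution $g\mapsto g^{-1}$ (which contributes a factor $\chi(g)$ from $dg^{-1}=\chi(g)\,dg$) symmetrizes the kernel into exactly that of $P_t$, yielding $\langle\omega,P_t\omega'\rangle_{L^2}$. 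Boundedness of $P_t$ itself is the one genuinely analytic ingredient, but it is mild: inserting the bump function $f$ restricts the $G$-integral to an essentially compact set, so a single Cauchy–Schwarz estimate in the $G$-variable suffices.

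The only real difficulty is bookkeeping: keeping the powers of the modular character $\chi$ straight through the nested changes of variables, where the weight $\chi(g)^{t/2}$ in the kernel, the Jacobian $\chi(g)^{\pm1}$ from inverting Haar measure, and the factor $\chi(g)^{t-1}$ coming from $A_t$ must all cancel. As a safeguard I would check the exponents against the known special cases $t=1$ in \cite[Appendix D]{mathaizhang} and $t=0$ in \cite[Proposition 3.1]{tangyaozhang}, of which the family $P_t$ here is the common refinement.
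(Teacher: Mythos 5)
Your proposal is correct and carries out exactly the verification the paper intends: the paper itself gives no proof beyond referring to the pattern of \cite[Proposition 3.1, Theorem 4.1]{tangyaozhang}, and your computation is the general-$t$ version of that argument, with the exponent bookkeeping $(t-1)-\tfrac{t}{2}+1=\tfrac{t}{2}$ in the equivariance step and $\tfrac{t}{2}+(1-t)+1-1=\tfrac{t}{2}$ in the self-adjointness step coming out right. The only cosmetic remark is that the lemma's displayed identity should read $g^*(P_t(\omega)/f)=\chi(g)^{t/2}\,P_t(\omega)/f$, which is the statement you actually (correctly) prove.
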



Moreover, writing $T$ into the formula $T = \sum_{i = 1}^{4n+1}c(e_i)\nabla_{e_i}+B$, we find: 
\begin{lemma}\label{orthogonal project composition with the dirac}
    For any $\omega\in\Omega^{i}_{\chi^{t/2}}(M)$, 
    $$P_tT(f\omega) = f\cdot (d+d^*+A_t^{-1}dA_t\wedge - A_t^{-1}\grad A_t\interiorproduct)\omega+f\cdot B\omega.$$
\end{lemma}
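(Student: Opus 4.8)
The plan is to compute $P_tT(f\omega)$ by writing $T = \sum_{i=1}^{4n+1}c(e_i)\nabla_{e_i} + B$ and pushing the factor $f$ (together with the averaging that defines $P_t$) past the Dirac operator $D = \sum_i c(e_i)\nabla_{e_i}$, tracking the extra zeroth-order terms produced by differentiating $f$ and by the weight $A_t^{-2}$ in the formula for $P_t$. The point is that $P_t$ restricted to the closed subspace $\mathbf{H}_{\chi^{t/2}}^{0}(M,f)$ is the identity (Lemma \ref{orthogonal projection}), and $f\omega$ lies in that subspace when $\omega\in\Omega^i_{\chi^{t/2}}(M)$; so the real content is to identify which element of $\mathbf{H}_{\chi^{t/2}}^{0}(M,f)$ the form $T(f\omega) = D(f\omega) + B(f\omega)$ maps to under $P_t$. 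Since $B$ is an order-$0$ operator it commutes with multiplication by $f$, giving the term $f\cdot B\omega$ directly; so the whole computation reduces to the $D$-part.

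First I would use the Leibniz rule for $\nabla$ (equivalently, the derivation property of $d$ and $d^*$ against the scalar $f$) to write $D(f\omega) = f\,D\omega + c(df)\omega$, where $c(df)\omega = df\wedge\omega - \grad f \interiorproduct\,\omega$ is the principal-symbol contribution of differentiating $f$. Now apply $P_t$. On the piece $f\,D\omega$: note $D\omega$ need not satisfy the equivariance $g^*(D\omega) = \chi(g)^{t/2}D\omega$ pointwise — but $D$ commutes with the $G$-action (the metric is $G$-invariant, hence so is $d^*$, and $B$ is $G$-equivariant), so $D\omega$ does satisfy that equivariance, hence $f\,D\omega\in\mathbf{H}_{\chi^{t/2}}^{0}(M,f)$ and $P_t(f\,D\omega) = f\,D\omega$. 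The remaining piece is $P_t\big(c(df)\omega\big)$, and here the $f$ has been consumed, so $P_t$ acts non-trivially: using the explicit kernel $P_t(\alpha)(x) = \frac{f(x)}{A_t(x)^2}\int_G \chi(g)^{t/2} f(gx)\,\alpha(gx)\,dg$, one expands $c(df)\omega$, uses $g^*\omega = \chi(g)^{t/2}\omega$ to pull $\omega$ out, and is left with $\frac{f(x)}{A_t(x)^2}\Big(\int_G \chi(g)^{t/2}f(gx)\, (g^*(df))(x)\,dg\Big)\wedge\omega(x)$ minus the analogous interior-product term. The claim is that the averaged one-form in parentheses equals $\big(\int_G \chi(g)^t f(gx)^2\,dg\big)$-weighted combination that reorganizes exactly into $dA_t^2 = 2A_t\,dA_t$ — concretely, differentiating $A_t(x)^2 = \int_G f(gx)^2\chi(g)^t\,dg$ gives $2A_t\,dA_t = \int_G 2 f(gx)(g^*df)(x)\chi(g)^t\,dg$, which is not quite the same integrand (it has $\chi^t$ and an extra $f(gx)$ rather than $\chi^{t/2}$), so this is where care is needed.

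The main obstacle, then, is matching the two integrals: the averaging in $P_t$ carries weight $\chi(g)^{t/2}f(gx)$, whereas the derivative of $A_t^2$ carries weight $\chi(g)^t f(gx)$. The resolution is that one must not move $\omega$ out naively but rather recognize that $c(df)\omega$, after multiplication by $f$ and before applying $P_t$, is better written as $c(d(f\cdot\text{something}))$ minus corrections; more efficiently, I would instead start from the identity $D(f\omega) = A_t\, D\big(A_t^{-1} f\omega\big) + \big(A_t^{-1}c(dA_t)\big)(f\omega) - \text{(self-adjoint bookkeeping)}$, i.e. conjugate $D$ by the scalar $A_t$ rather than by $f$. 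Since $A_t^{-1}f\omega$ does lie in the right weighted space up to the $\chi^{(1-t)/2}$-twist from Lemma \ref{A is G-equivariant} — actually one checks $g^*(A_t^{-1}f\omega/f)$ has the correct behavior so that $P_t$ fixes $f\cdot(A_t^{-1}D(A_t^{-1}f\omega)\cdot\text{appropriately})$ — the cross term is manufactured to be exactly $f\cdot(A_t^{-1}dA_t\wedge - A_t^{-1}\grad A_t\interiorproduct)\omega$. In practice I expect the cleanest route is: (i) establish $P_t\circ(\text{mult by }f) = (\text{mult by }f)\circ(\text{average against }\chi^{t/2}f)$ as operators, (ii) write $T(f\omega)$ as above, (iii) apply this and simplify the two averaged integrals using the defining relation for $A_t$ and Lemma \ref{A is G-equivariant}, and (iv) verify the principal-symbol identity $c(dA_t) = dA_t\wedge - \grad A_t\interiorproduct$ and assemble. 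The bookkeeping of signs in the interior-product term and the $A_t$ versus $A_t^2$ normalization is the only genuinely delicate point; everything else is the standard Leibniz-rule-plus-averaging manipulation already used in \cite[Appendix D]{mathaizhang} and \cite[Appendix]{mathaizhang}.
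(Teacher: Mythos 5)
Your skeleton is the right one (and is the natural route, since the paper only sketches this lemma by reference to \cite[Proposition 3.1, Theorem 4.1]{tangyaozhang}): write $D(f\omega)=fD\omega+c(df)\omega$ with $c(df)=df\wedge{}-\grad f\interiorproduct$, observe $P_t(fD\omega)=fD\omega$ and $P_t(fB\omega)=fB\omega$ because $D$ and $B$ commute with the $G$-action, and then compute $P_t(c(df)\omega)$ from the explicit kernel. The problem is in that last computation: when you ``pull $\omega$ out'' you drop a factor of $\chi(g)^{t/2}$. The integrand of $P_t$ involves $\omega(gx)$, i.e.\ the pullback $(g^*(c(df)\omega))(x)=c\bigl(d(f\circ g)\bigr)(x)\,(g^*\omega)(x)=\chi(g)^{t/2}c\bigl(d(f\circ g)\bigr)(x)\,\omega(x)$, so after extracting $\omega(x)$ the surviving weight is $\chi(g)^{t/2}\cdot\chi(g)^{t/2}=\chi(g)^{t}$, not $\chi(g)^{t/2}$. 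With the correct weight the integral closes on the nose:
\begin{equation*}
\int_G \chi(g)^{t} f(gx)\,d(f\circ g)(x)\,dg \;=\; \tfrac12\, d\Bigl(\int_G \chi(g)^{t} f(gx)^2\,dg\Bigr) \;=\; \tfrac12\, d\bigl(A_t^2\bigr)(x) \;=\; A_t(x)\,dA_t(x),
\end{equation*}
and dividing by the $A_t(x)^2$ in the kernel of $P_t$ gives exactly $f\cdot A_t^{-1}dA_t\wedge\omega$; the interior-product term is identical because the metric is $G$-invariant, so $g^*(\grad f\interiorproduct\omega)=\grad(f\circ g)\interiorproduct g^*\omega$. (The same bookkeeping is what makes $P_t(f\omega)=f\omega$: there the weight $\chi(g)^t f(gx)^2$ integrates to $A_t^2$.)

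Because of the dropped factor you perceive a mismatch ($\chi^{t/2}$ versus $\chi^{t}$, ``an extra $f(gx)$'') that is not actually there, and you then substitute a workaround --- conjugating $D$ by $A_t$ with unspecified ``self-adjoint bookkeeping'' --- which you do not carry out and which, as stated, is not obviously consistent: by Lemma \ref{A is G-equivariant}, $A_t^{-1}f\omega$ transforms with weight $\chi^{t-1/2}$ rather than $\chi^{t/2}$, so it does not lie in $\mathbf{H}^{0}_{\chi^{t/2}}(M,f)$ and $P_t$ does not fix it; making that detour rigorous would require redoing precisely the averaging computation you were trying to avoid. So the proof as proposed has a genuine gap at its only substantive step, but the gap is repaired simply by restoring the $\chi(g)^{t/2}$ from $g^*\omega$ and running your original direct computation, which then matches the stated formula (and is consistent with the identity $(1-P_t)D(f\omega)=c(df-fA_t^{-1}dA_t)\omega$ used later in Lemma \ref{int representation norm convergent}).
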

The verification of Lemma \ref{orthogonal projection} and Lemma \ref{orthogonal project composition with the dirac} follows a similar pattern as \cite[Proposition 3.1, Theorem 4.1]{tangyaozhang}.

Next, we consider the twisted de Rham differentiation
\begin{align}\label{twisted de Rham on chi t/2}
\begin{split}
   d_{A_t}: \Omega_{\chi^{t/2}}^i(M) &\to \Omega_{\chi^{t/2}}^{i+1}(M)\\
   \omega&\mapsto A^{-1}_t(d(A_t\omega)).
   \end{split}
\end{align}
which defines a chain complex. We denote by $H^i_{\chi^{t/2}}(M,d_{A_t})$ the $i$-th cohomology group of chain complex (\ref{twisted de Rham on chi t/2}). Then, following \cite[Section 3]{tangyaozhang}, we look at  
\begin{align*}
    d_{t,f}: f\cdot\Omega^i_{\chi^{t/2}}(M)&\to f\cdot\Omega^{i+1}_{\chi^{t/2}}(M)\\
    f\cdot\omega&\mapsto f\cdot d_{A_t}\omega = f\cdot(d\omega+A^{-1}_tdA_t\wedge\omega).
\end{align*}
Its formal adjoint is given by 
$$d_{t,f}^*(f\omega) = f\cdot (d^*\omega - A^{-1}_t\grad A_t\interiorproduct\omega)$$
for all $\omega\in\Omega^*_{\chi^{t/2}}(M)$. We immediately find that when $B = 0$,
$$P_t D = d_{t,f}+d_{t,f}^*.$$

We have the following Hodge theorem (See \cite[Theorem 1.1]{tangyaozhang}) under the proper cocompact Lie group action: 
\begin{theorem}\label{hodge theorem in noncompact}
    The kernel of $$\left(P_t D\right)^2 = (d_{t,f}+d_{t,f}^*)^2: \mathbf{H}^{0,i}_{\chi^{t/2}}(M,f)\to \mathbf{H}^{0,i}_{\chi^{t/2}}(M,f)$$ is finite dimensional and canonically isomorphic to the cohomology group $H^i_{\chi^{t/2}}(M,d_{A_t})$ through the map $f\eta\mapsto \eta$. 
\end{theorem}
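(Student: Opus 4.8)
The plan is to establish the standard Hodge-theoretic package (orthogonal decomposition, identification of harmonic forms with cohomology) on the twisted complex $d_{t,f}$, and then transport the conclusion to the complex $d_{A_t}$ via the isometric identification $f\eta \leftrightarrow \eta$. First I would observe that by Lemma~\ref{orthogonal project composition with the dirac} (with $B=0$) the operator $P_tD$ restricted to $\mathbf{H}^{0,i}_{\chi^{t/2}}(M,f)$ really is $d_{t,f}+d_{t,f}^*$, that $d_{t,f}$ and its formal adjoint $d_{t,f}^*$ have the explicit shapes recorded just above the theorem, and that $d_{t,f}^2=0$ (this is immediate from $d_{A_t}^2=0$, which in turn follows since $d_{A_t}\omega = A_t^{-1}d(A_t\omega)$ conjugates the ordinary de Rham differential). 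Hence $(P_tD)^2 = d_{t,f}d_{t,f}^* + d_{t,f}^*d_{t,f}$ is the associated Laplacian, and $\ker (P_tD)^2 = \ker d_{t,f}\cap\ker d_{t,f}^*$ by the usual computation $\langle \Delta\alpha,\alpha\rangle = \|d_{t,f}\alpha\|^2+\|d_{t,f}^*\alpha\|^2$ valid on the smooth compactly-pulled-back forms $f\cdot\Omega^i_{\chi^{t/2}}(M)$, together with the finite-dimensionality (and hence closedness, smoothness of elements) supplied by the Garding inequality of Theorem~\ref{garding inequalities} applied to $T = D$.

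Next I would run the abstract Hodge argument. Finite-dimensionality of $\ker(P_tD)^2$ is already given by Theorem~\ref{garding inequalities}; elliptic regularity (again via the Garding estimate, bootstrapping $\|f\eta\|_{s+1}$ from $\|f\eta\|_s$) shows every element of the kernel lies in $f\cdot\Omega^i_{\chi^{t/2}}(M)$, i.e.\ is genuinely smooth of the prescribed equivariance type. The key functional-analytic input is that $d_{t,f}$ has closed range on each $\mathbf{H}^{0,i}_{\chi^{t/2}}(M,f)$; this follows from the Garding inequality restricted to the orthogonal complement of the kernel (the estimate $\|P_tD(f\eta)\|_0\ge C_1\|f\eta\|_1 - C_2\|f\eta\|_0$ upgrades, on $(\ker)^\perp$, to a genuine coercive bound $\|P_tD(f\eta)\|_0 \ge C\|f\eta\|_0$ by the standard Peetre/Rellich compactness trick, using that the inclusion $\mathbf{H}^1\hookrightarrow\mathbf{H}^0$ is compact in the proper cocompact setting). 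With closed range in hand, one gets the orthogonal decomposition
\begin{equation*}
\mathbf{H}^{0,i}_{\chi^{t/2}}(M,f) = \ker(P_tD)^2 \ \oplus\ \overline{\image\, d_{t,f}} \ \oplus\ \overline{\image\, d_{t,f}^*},
\end{equation*}
and the usual diagram chase then yields a canonical isomorphism from $\ker(P_tD)^2$ in degree $i$ to $\ker d_{t,f}/\image\, d_{t,f}$, the cohomology of the complex $(f\cdot\Omega^\bullet_{\chi^{t/2}}(M),\, d_{t,f})$.

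Finally I would identify that cohomology with $H^i_{\chi^{t/2}}(M,d_{A_t})$. The map $\omega\mapsto f\cdot\omega$ is, by the remark following Theorem~\ref{garding inequalities}, an injective chain isomorphism $(\Omega^\bullet_{\chi^{t/2}}(M), d_{A_t}) \to (f\cdot\Omega^\bullet_{\chi^{t/2}}(M), d_{t,f})$ intertwining $d_{A_t}$ and $d_{t,f}$ by the very definition of $d_{t,f}$; so it induces an isomorphism on cohomology, and composing with the Hodge isomorphism above gives precisely the stated map $f\eta\mapsto\eta$ from $\ker(P_tD)^2$ onto $H^i_{\chi^{t/2}}(M,d_{A_t})$. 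The main obstacle is the closed-range step: one must make sure the Garding inequality of Theorem~\ref{garding inequalities}, which is an estimate for the full operator $P_tD$ on all-degree forms, can be localized to each degree $i$ and promoted to coercivity modulo the finite-dimensional kernel using compactness of the Sobolev embedding in the proper cocompact context — everything else is the classical Hodge-theory bookkeeping, carried out exactly as in \cite[Section 3]{tangyaozhang}.
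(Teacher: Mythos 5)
Your proposal is correct and follows essentially the same route as the source: the paper itself does not give a proof but defers to \cite[Theorem 1.1]{tangyaozhang} (proved there for $t=0$, with the remark that general $t$ is identical), and that proof is exactly the standard Hodge package you describe --- Garding estimate plus Rellich compactness on the compactly supported forms $f\eta$ to get finite-dimensional kernel and closed range, the orthogonal decomposition, and the chain isomorphism $\eta\mapsto f\eta$ intertwining $d_{A_t}$ with $d_{t,f}$. The one point worth stating explicitly (which you implicitly use) is injectivity of $\eta\mapsto f\eta$, which holds because $f\equiv 1$ on $U\supset Y$ and $G\cdot Y=M$, so an equivariant form vanishing on $U$ vanishes everywhere.
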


\begin{remark}
   \normalfont In \cite{tangyaozhang}, Theorem \ref{garding inequalities} and Theorem \ref{hodge theorem in noncompact} are proved for $t = 0$. However, the proof for general $t\in\mathbb{R}$ follows the same steps. We leave it to readers as an exercise. 
\end{remark}

\section{Classes by skew-adjoint operators}\label{elliptic section}
In this section, we turn $D$ into a skew-adjoint Fredholm operator and present its image under the mod 2 index map $\ind_2^G$, through a procedure similar to \cite[Appendix E]{mathaizhang}.

Since $D$ is self-adjoint and anti-commutes with $\hat{c}(\dvol)$, we have $$(\hat{c}(\dvol)D)\left(1+(\hat{c}(\dvol)D)^*\hat{c}(\dvol)D\right)^{-1/2} = \hat{c}(\dvol)\circ D(1+D^2)^{-1/2}.$$ Thus, we get a skew-adjoint Fredholm operator
\begin{align}\label{dsig definition}
    D_\sig \coloneqq \hat{c}(\dvol)\circ D(1+D^2)^{-1/2}: L^2(\Lambda^\even T^*M)\to L^2(\Lambda^\even T^*M)
\end{align}
of which the parametrix is $-D_\sig$. 

Now, we consider the following data:
\begin{enumerate}[label = (\arabic*)]
    \item The real Hilbert module $L^2(\Lambda^\even T^*M){\hat{\otimes}}\clifford$ over $Cl_{0,1}$. Its $\mathbb{Z}_2$-grading is given by 
    $\omega+\eta\cdot v\mapsto \omega-\eta\cdot v$.
    \item The representation $\phi_\sig: C_0(M)\to\mathbb{L}(L^2(\Lambda^\even T^*M){\hat{\otimes}}\clifford)$ is given by multiplication of functions.
    \item A self-adjoint bounded Fredholm operator 
    \begin{align*}
        \mathscr{D}_{\sig}: L^2(\Lambda^\even T^*M)\hat{\otimes}\clifford&\to L^2(\Lambda^\even T^*M)\hat{\otimes}\clifford\\
        \omega+\eta v&\mapsto -(D_\sig(\omega)+D_\sig(\eta)v)v.
    \end{align*}
    It is $G$-equivariant as $G$ acts on forms by 
    $g\cdot \omega \coloneqq (g^{-1})^*\omega$. 
\end{enumerate}
Then, $[\dsig]\in KKO^G(C_0(M),\clifford)$ is given by $\left(\ltwoeven{\hat{\otimes}}\clifford, \rho_\sig, \dsig\right)$.

We are ready to compute $\ind^G_2([\dsig])$. First, by \cite[Theorem 3.11]{GGKasparov_1981}, we see that $j^G([\dsig])\in KKO(C^*(G, C_0(M)),C^*(G,Cl_{0,1}))$ is represented by a triple 
$$\left(C^*(G, \ltwoeven\hat{\otimes} Cl_{0,1}), \tilde{\phi}_\sig, \tilde{\mathscr{D}}_\sig\right),$$
with the representation $\tilde{\phi}_\sig$ of 
$C^*(G, C_0(M))$ on $\left(C^*(G, \ltwoeven\hat{\otimes} Cl_{0,1})\right)$
given by 
\begin{align}\label{representation before integral}
    \left(\tilde{\phi}_\sig(\psi)(\omega+\eta v)\right)(h) = \int_G\psi(g) (g^{-1})^*(\omega(g^{-1}h))dg + \int_G\psi(g)(g^{-1})^*(\eta(g^{-1}h))dg\cdot v
\end{align}
for all $\psi\in C_c(G, C_0(M))\text{\ and\ } \omega+\eta v\in C_c(G, \ltwoeven\hat{\otimes} Cl_{0,1})$
(See \cite[Lemma 3.10]{Kasparovequivariant}). 
Moreover, $\tilde{\mathscr{D}}_\sig$ is given by 
\begin{align}\label{before integration}
\left(\tilde{\mathscr{D}}_\sig(\omega+\eta\cdot v)\right)(g) = \dsig\left(\omega(g)+\eta(g)\cdot v\right) = -\left(D_\sig(\omega(g))+D_\sig(\eta(g))\cdot v\right)\cdot v
\end{align}
for all $\omega+\eta\cdot v\in C_c(G, \ltwoeven\hat{\otimes} Cl_{0,1})$. 

Next, in $j^G([\dsig])\hat{\otimes}_{C^*(G)}[\mathbbm{1}]$, the Hilbert module is 
$$C^*(G, \ltwoeven\hat{\otimes} Cl_{0,1})\hat{\otimes}_{C^*(G)}\mathbb{R}.$$
As mentioned in \cite[Section VII.1]{davidson1996c}, the representation of $C^*(G)$ on $\mathbb{R}$ is given by 
$$\forall \psi\in L^1(G)\text{\ and\ } a\in\mathbb{R}, \psi\cdot a = \left(\int_G \psi(g)dg\right)a.$$
Thus, we have an isomorphism
\begin{align*}
    C^*\left(G,\ltwoeven\hat{\otimes} Cl_{0,1}\right)\hat{\otimes}_{Cl_{0,1}\hat{\otimes}C^*(G)}\left(Cl_{0,1}\hat{\otimes}\mathbb{R}\right)&\to \ltwoeven\hat{\otimes} Cl_{0,1}\\
    (\omega + \eta v)\cdot (a+bv) &\mapsto \left(\int_G\omega(g)dg +\int_G\eta(g)dg v\right) (a+bv).
\end{align*}
Since $\dsig$ is $G$-equivariant, the connection (See \cite[Appendix A]{conneslongitudinal}) is given by integrating both sides of (\ref{representation before integral}) and (\ref{before integration}) respectively. Summarizing all computations, we get: 
\begin{lemma}
    The class $j^G([\dsig])\hat{\otimes}[\mathbbm{1}]\in KKO(C^*(G, C_0(M)),Cl_{0,1})$ is represented by the triple
$(\tau_\sig, \ltwoeven\hat{\otimes}Cl_{0,1}, \dsig)$. Here, $\tau_{\sig}$ is determined by  
\begin{align}\label{representation after integral}
    {\tau}_\sig(\psi)(\omega+\eta v) = \int_G\psi(g) (g^{-1})^*\omega dg + \int_G\psi(g)(g^{-1})^*\eta dg\cdot v.
\end{align}
for all $\psi\in C_c(G, C_0(M))\text{\ and\ } \omega+\eta v\in  \ltwoeven\hat{\otimes} Cl_{0,1}$. 
\end{lemma}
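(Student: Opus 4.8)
The plan is to identify the statement as the concrete outcome of the interior Kasparov product of the two triples already written down above: the Kasparov descent representative $(C^*(G,\ltwoeven\hat{\otimes}\clifford),\ \tilde{\phi}_\sig,\ \tilde{\mathscr{D}}_\sig)$ for $j^G([\dsig])$ coming from \cite[Theorem 3.11]{GGKasparov_1981} (equations \eqref{representation before integral} and \eqref{before integration}), and the $\clifford$-amplified class $[\mathbbm{1}]\hat{\otimes}\mathrm{id}_{\clifford}$, which is represented by the degenerate triple $(\mathbb{R}\hat{\otimes}\clifford,\ \text{``integrate over }G\text{''},\ 0)$. The computation then splits into: (i) identifying the interior tensor product module; (ii) transporting the representation and the operator along that identification; (iii) checking that the resulting triple legitimately represents the product.

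For (i), I would check that $\omega\mapsto\int_G\omega(g)\,dg$ extends to a unitary of graded Hilbert $\clifford$-modules
$$C^*\!\big(G,\ltwoeven\hat{\otimes}\clifford\big)\ \hat{\otimes}_{\clifford\hat{\otimes}C^*(G)}\ \big(\clifford\hat{\otimes}\mathbb{R}\big)\ \xrightarrow{\ \cong\ }\ \ltwoeven\hat{\otimes}\clifford,$$
which is exactly the isomorphism displayed just before the Lemma. Well-definedness and $\clifford$-linearity are formal; the isometry is the Haar-measure identity obtained by composing the $C^*(G,\clifford)$-valued inner product $\langle\xi,\eta\rangle(g)=\int_G\langle\xi(h),\eta(hg)\rangle\,dh$ with the state ``integrate over $G$'' and substituting $k=hg$, a left translation, so that $\int_G\langle\xi,\eta\rangle(g)\,dg=\langle\int_G\xi,\int_G\eta\rangle$; surjectivity is immediate by feeding in sections $g\mapsto\varphi(g)\zeta$ with $\varphi\in C_c(G)$, $\int_G\varphi=1$. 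For (ii), applying $\int_G(\cdot)\,dg$ to \eqref{representation before integral} and substituting $k=g^{-1}h$ --- again a left translation, which is precisely why the modular character $\chi$ does not intervene at this step --- together with Fubini for $\psi,\omega$ with compact support, turns $\tilde{\phi}_\sig(\psi)\hat{\otimes}1$ into $\tau_\sig(\psi)$ of \eqref{representation after integral}; and since $\tilde{\mathscr{D}}_\sig$ acts pointwise by the bounded operator $\dsig$ as in \eqref{before integration} and, by $G$-equivariance of $\dsig$, commutes with the convolution action of $C^*(G)$, it descends through the tensor product and integrates to $\dsig$ on $\ltwoeven\hat{\otimes}\clifford$.

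For (iii), the connection and positivity conditions of the Kasparov product are cheap here. Since the operator of $[\mathbbm{1}]\hat{\otimes}\mathrm{id}_{\clifford}$ is zero and its underlying module $\clifford$ is singly generated over $\clifford$ --- so that every adjointable operator out of it is compact --- the condition that $\dsig$ be an $F_2$-connection ($F_2=0$) is automatic, and the positivity condition is trivial because, after the identification of (i), the operator $\tilde{\mathscr{D}}_\sig\hat{\otimes}1$ is $\dsig$ itself, so the relevant graded commutator vanishes. The one substantive point is that $(\ltwoeven\hat{\otimes}\clifford,\ \tau_\sig,\ \dsig)$ is an honest Kasparov cycle, i.e.\ $\tau_\sig(\psi)(\dsig^2-1)$ is compact for $\psi\in C_c(G,C_0(M))$: using $\dsig^2-1=-(1+D^2)^{-1}$ and the fact that the $G$-invariance of the metric makes $(g^{-1})^*$ commute with $(1+D^2)^{-1}$, this becomes a norm-convergent integral over a compact subset of $G$ of operators $\psi(g)(1+D^2)^{-1}(g^{-1})^*$, each compact because $\psi(g)\in C_0(M)$ --- the same compactness mechanism used in \cite[Appendix E]{mathaizhang}.

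The step I expect to be the genuine obstacle --- the rest being careful bookkeeping of crossed-product conventions, the grading on $\clifford\hat{\otimes}C^*(G)$, adjointability, and the change-of-variables --- is keeping the Kasparov-product identifications fully honest: confirming that the descent representative is exactly the one with the pointwise operator and the convolution representation, and that the interior product with $[\mathbbm{1}]\hat{\otimes}\mathrm{id}_{\clifford}$ is genuinely computed by the integration unitary rather than merely formally. Once the module unitary, the two transports, and the cycle/compactness check are pinned down, the triple $(\tau_\sig,\ \ltwoeven\hat{\otimes}\clifford,\ \dsig)$ is forced, and this is the assertion of the Lemma.
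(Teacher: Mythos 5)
Your proposal is correct and follows essentially the same route as the paper, whose ``proof'' is exactly the discussion preceding the lemma: identify the interior tensor product module with $\ltwoeven\hat{\otimes}\clifford$ via integration over $G$ (using left-invariance of $dg$ so that $\chi$ does not intervene), transport the convolution representation and the pointwise operator through that unitary, and note that $G$-equivariance of $\dsig$ together with the zero operator in $[\mathbbm{1}]$ makes the connection and positivity conditions automatic. The only nit is that the graded commutator $[\tilde{\mathscr{D}}_\sig\hat{\otimes}1,\dsig]$ equals $2\dsig^2\geq 0$ rather than vanishing, but the positivity condition still holds trivially.
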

By (\ref{representation after integral}), the idempotent $p_t\in C^*(G, C_0(M))$ satisfies $\tau_\sig(p_t)(\omega+\eta v) = P_t\omega + P_t\eta v$ for all $\omega, \eta\in \ltwoeven$. Thus, using the Grassmann connection (See \cite[Proposition A.2]{conneslongitudinal}), we get the triple representing $\ind_2^G([\dsig])$. 
\begin{proposition}
    The class $\ind_2^G([\dsig])\in KKO(\mathbb{R}, Cl_{0,1})$ is represented by the triple 
    $$\left(\mathbf{H}_{\chi^{t/2}}^{0,\text{\normalfont even}}(M,f)\hat{\otimes} Cl_{0,1}, \rho_\sig, \left(P_t\hat{\otimes}\text{\normalfont id}_{Cl_{0,1}}\right)\dsig \left(P_t\hat{\otimes}\text{\normalfont id}_{Cl_{0,1}}\right)\right),$$
    where $\rho_\sig$ is the scalar multiplication of $\mathbb{R}$ on $\mathbf{H}_{\chi^{t/2}}^{0,\text{\normalfont even}}(M,f)\hat{\otimes} Cl_{0,1}$. 
\end{proposition}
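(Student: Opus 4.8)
The plan is to unwind $\ind_2^G([\dsig])$ into a single iterated Kasparov product and then substitute the two computations already in hand. By Definition \ref{mod 2 index map generalized definition} together with the definition of the assembly map $\mu$,
\[
\ind_2^G([\dsig]) \;=\; \bigl([p_t]\,\hat{\otimes}_{C^*(G,C_0(M))}\,j^G([\dsig])\bigr)\,\hat{\otimes}_{C^*(G)}\,[\mathbbm{1}],
\]
and associativity of the interior Kasparov product (\cite{GGKasparov_1981,Kasparovequivariant}), together with the fact that the factor $Cl_{0,1}$ carries the trivial $G$-action, lets this be rebracketed as
\[
\ind_2^G([\dsig]) \;=\; [p_t]\,\hat{\otimes}_{C^*(G,C_0(M))}\,\bigl(j^G([\dsig])\,\hat{\otimes}_{C^*(G)}\,[\mathbbm{1}]\bigr).
\]
The inner factor has already been identified in the preceding Lemma as the class of the triple with module $\ltwoeven\hat{\otimes}Cl_{0,1}$, representation $\tau_\sig$, and operator $\dsig$. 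So it remains only to compute the exterior product of the projection class $[p_t]$ with this triple over $A := C^*(G,C_0(M))$.

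Since $[p_t]$ is represented by $(p_tA,\rho,0)$ — the trivially graded module $p_tA$, the scalar representation $\rho$ of $\mathbb{R}$, and the zero operator — the product with a class $(F,\phi,D_F)$ over $A$ is computed by the Grassmann-connection construction of Connes \cite[Proposition A.2]{conneslongitudinal}: the underlying module is $p_tA\hat{\otimes}_A F\cong\phi(p_t)F$, the representation of $\mathbb{R}$ is again scalar multiplication, and the operator is the compression $\phi(p_t)\,D_F\,\phi(p_t)$. Applying this with $F=\ltwoeven\hat{\otimes}Cl_{0,1}$, $\phi=\tau_\sig$, $D_F=\dsig$, and using the identity $\tau_\sig(p_t)(\omega+\eta v)=P_t\omega+P_t\eta v$ recorded just above the statement, the module becomes $P_t(\ltwoeven)\hat{\otimes}Cl_{0,1}$, which by Lemma \ref{orthogonal projection} (identifying $P_t$ restricted to even-degree forms with the orthogonal projection onto $\mathbf{H}^{0,\even}_{\chi^{t/2}}(M,f)$) is exactly $\mathbf{H}^{0,\even}_{\chi^{t/2}}(M,f)\hat{\otimes}Cl_{0,1}$; the $\mathbb{R}$-action becomes $\rho_\sig$; and the operator is $\bigl(P_t\hat{\otimes}\mathrm{id}_{Cl_{0,1}}\bigr)\dsig\bigl(P_t\hat{\otimes}\mathrm{id}_{Cl_{0,1}}\bigr)$. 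The grading bookkeeping is immediate: $P_t$ preserves the degree of forms and acts as the identity on the Clifford factor, hence is even, so the submodule inherits the $\mathbb{Z}_2$-grading $\omega+\eta v\mapsto\omega-\eta v$ and the compressed operator still has degree $1$.

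The one step requiring genuine care — and which I would treat as the main obstacle — is verifying that this compressed triple is a bona fide representative of the Kasparov product rather than a merely formal candidate: one must check that $\bigl(P_t\hat{\otimes}\mathrm{id}\bigr)\dsig\bigl(P_t\hat{\otimes}\mathrm{id}\bigr)$ is a $\dsig$-connection on $p_tA\hat{\otimes}_A\bigl(\ltwoeven\hat{\otimes}Cl_{0,1}\bigr)$ and that $\bigl(\tau_\sig(p_t)\dsig\tau_\sig(p_t)\bigr)^2-\tau_\sig(p_t)$ is compact on $\mathbf{H}^{0,\even}_{\chi^{t/2}}(M,f)\hat{\otimes}Cl_{0,1}$ (self-adjointness is automatic from self-adjointness of $\dsig$ and $\tau_\sig(p_t)$, and the positivity condition is vacuous because the first operator is $0$). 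The connection condition reduces to compactness of $\tau_\sig(p_t)\dsig\bigl(1-\tau_\sig(p_t)\bigr)$ and its adjoint, and, after writing $\dsig^2=\mathrm{id}-(1+D^2)^{-1}\hat{\otimes}\mathrm{id}_{Cl_{0,1}}$, the module-level condition reduces to compactness of $P_t(1+D^2)^{-1}P_t$ on $\mathbf{H}^{0,\even}_{\chi^{t/2}}(M,f)$. This is precisely where proper cocompactness enters: by the Garding-type estimate of Theorem \ref{garding inequalities} (applied with $B=0$, i.e.\ $T=D$), $P_tD$ is an unbounded Fredholm operator with finite-dimensional kernel, and the required compactness then follows by functional calculus exactly as in \cite[Appendix E]{mathaizhang}. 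Granting this, the displayed triple satisfies the defining conditions of the Kasparov product, and since that product class is uniquely determined, it represents $\ind_2^G([\dsig])$, completing the proof.
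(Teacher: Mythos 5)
Your proposal is correct and follows essentially the same route as the paper: both rebracket the iterated product so that the inner factor is the already-computed class $(\tau_\sig,\ltwoeven\hat{\otimes}Cl_{0,1},\dsig)$, identify $\tau_\sig(p_t)$ with $P_t$, and apply the Grassmann-connection formula of \cite[Proposition A.2]{conneslongitudinal} to obtain the compressed triple on $\mathbf{H}^{0,\even}_{\chi^{t/2}}(M,f)\hat{\otimes}Cl_{0,1}$. The extra verification you flag (the connection and compactness conditions, with Fredholmness coming from the Garding inequality) is exactly what the paper delegates to Connes' proposition and to the subsequent Lemma \ref{c hat pt d class}.
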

\begin{proof}
    Notice that $$p_tC^*(G,C_0(M))\hat{\otimes}_{C^*(G,C_0(M))} \ltwoeven = P_t \ltwoeven. $$
    By Lemma \ref{orthogonal projection}, we get $\mathbf{H}_{\chi^{t/2}}^{0,\text{\normalfont even}}(M,f)$. 
\end{proof}
In addition, by the Garding inequality Theorem \ref{garding inequalities}, we have: 
\begin{lemma}\label{c hat pt d class}
The operator 
    $$\hat{c}(\dvol)\circ P_tD\left(1+(P_tD)^2\right)^{-1/2}: \mathbf{H}_{\chi^{t/2}}^{0,\text{\normalfont even}}(M,f)\to \mathbf{H}_{\chi^{t/2}}^{0,\text{\normalfont even}}(M,f).$$
        is skew-adjoint and Fredholm. 
\end{lemma}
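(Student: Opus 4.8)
The plan is to verify the two asserted properties---skew-adjointness and Fredholmness---separately, using the Garding inequality (Theorem \ref{garding inequalities}) for the Fredholm part and the anticommutation of $\hat c(\dvol)$ with $D$ for the skew-adjoint part. First I would record that $P_t$ is the orthogonal projection onto $\mathbf H^{0}_{\chi^{t/2}}(M,f)$ (Lemma \ref{orthogonal projection}) and that, by Lemma \ref{orthogonal project composition with the dirac} with $B=0$, the restriction $P_tD=d_{t,f}+d_{t,f}^*$ is the self-adjoint Dirac-type operator on $\mathbf H^{0}_{\chi^{t/2}}(M,f)$ associated to the twisted de Rham complex. In particular $(P_tD)^2$ is the nonnegative self-adjoint Laplacian of Theorem \ref{hodge theorem in noncompact}, so $(1+(P_tD)^2)^{-1/2}$ is a well-defined bounded positive operator commuting with $P_tD$, and the bounded transform $P_tD(1+(P_tD)^2)^{-1/2}$ is self-adjoint with norm $\le 1$.

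Next I would check that $\hat c(\dvol)$ preserves the subspace $\mathbf H^{0,\even}_{\chi^{t/2}}(M,f)$: since $M$ is $(4n+1)$-dimensional, $\hat c(\dvol)$ is an involution preserving the parity of forms, it is $G$-equivariant (being built from the $G$-invariant metric and orientation), and it commutes with multiplication by the bump function $f$, hence it descends to a self-adjoint involution on $\mathbf H^{0,\even}_{\chi^{t/2}}(M,f)$. The key algebraic point is that $\hat c(\dvol)$ anticommutes with $P_tD$ on this space: $\hat c(\dvol)$ anticommutes with $D=d+d^*$ on $L^2(\Lambda^*T^*M)$, and since $\hat c(\dvol)$ commutes with $P_t$ (both being $G$-equivariant and $f$-linear, or more directly because $P_t$ is built pointwise by averaging while $\hat c(\dvol)$ acts fibrewise), it also anticommutes with $P_tD$, hence with any odd function of $P_tD$, in particular with the bounded transform. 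Therefore $\hat c(\dvol)\circ P_tD(1+(P_tD)^2)^{-1/2}$ is the product of a self-adjoint involution and an anticommuting self-adjoint operator, which makes it skew-adjoint by the usual one-line computation $(\hat c\, B)^* = B^*\hat c^* = B\hat c = -\hat c B$.

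For Fredholmness, I would use that the bounded transform of an (unbounded) operator with finite-dimensional kernel and closed range is Fredholm in the Hilbert-module/operator sense: Theorem \ref{garding inequalities} (with $T=D$, $B=0$) gives the Garding estimate $\|P_tD(f\eta)\|_0\ge C_1\|f\eta\|_1-C_2\|f\eta\|_0$, which forces $P_tD$ to have finite-dimensional kernel and closed range on $\mathbf H^{0,\even}_{\chi^{t/2}}(M,f)$; equivalently $(1+(P_tD)^2)^{-1/2}$ differs from a parametrix-producing operator by something compact, so $P_tD(1+(P_tD)^2)^{-1/2}$ has the property that $1-(P_tD(1+(P_tD)^2)^{-1/2})^2 = (1+(P_tD)^2)^{-1}$ is compact (the resolvent of the Laplacian is compact by the Rellich-type embedding implicit in the Garding inequality). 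Composing with the bounded invertible involution $\hat c(\dvol)$ does not affect Fredholmness, so the stated operator is skew-adjoint and Fredholm. I expect the main obstacle to be the careful justification that $\hat c(\dvol)$ genuinely commutes with $P_t$ on the relevant subspace---this rests on $P_t$ acting by fibrewise-constant averaging in the group variable while $\hat c(\dvol)$ acts fibrewise on $\Lambda^\even T^*M$---and the verification that the resolvent $(1+(P_tD)^2)^{-1}$ is compact, which is the analytic heart and follows from the compactness of the inclusion $\mathbf H^{1}_{\chi^{t/2}}(M,f)\hookrightarrow\mathbf H^{0}_{\chi^{t/2}}(M,f)$ encoded in Theorem \ref{garding inequalities}.
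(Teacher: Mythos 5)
Your proposal is correct and follows essentially the same route as the paper: skew-adjointness comes from $\hat c(\dvol)$ commuting with $P_t$ and anticommuting with $D$ (hence with the bounded transform of $P_tD$), and Fredholmness comes from the Garding inequality of Theorem \ref{garding inequalities} via compactness of $(1+(P_tD)^2)^{-1}$. One small bookkeeping slip: in dimension $4n+1$ the operator $\hat c(\dvol)$ is a product of an odd number of degree-one operators and therefore \emph{reverses} the parity of forms rather than preserving it, so it does not itself restrict to $\mathbf H^{0,\even}_{\chi^{t/2}}(M,f)$; it is the composite $\hat c(\dvol)\circ P_tD$ that preserves the even subspace, and your adjoint computation goes through unchanged once this is noted.
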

\begin{proof}
    The skew-adjoint property is guaranteed by that $\hat{c}(\dvol)$ commutes with $P_t$, and $(P_t\hat{c}(\dvol)D)^*(P_t\hat{c}(\dvol)D) = (P_tD)^2$. The Fredholm property is by Theorem \ref{garding inequalities}. 
\end{proof}
We let $\varphi = \hat{c}(\dvol)\circ P_tD\left(1+(P_tD)^2\right)^{-1/2}$. With the notations in Proposition {\normalfont\ref{technical proposition}}, we define $\mathcal{V} \coloneqq \mathbf{H}_{\chi^{t/2}}^{0,\text{\normalfont even}}(M,f)\otimes Cl_{0,1}$, $\rho\coloneqq$ scalar multiplication, and $\Psi(x+yv)\coloneqq -(\varphi(x)+\varphi(y)v)v$. They determine a class $[\hat{c}\circ P_tD]\in KKO(\mathbb{R},Cl_{0,1})$. 

At last, to prove Theorem \ref{main result 1}, we should check that 
$$P_tD_\sig P_t-\hat{c}(\dvol)P_tD\left(1+(P_tD)^2\right)^{-1/2}$$ is compact on 
$\mathbf{H}_{\chi^{t/2}}^{0,\text{even}}(M,f)$. Since $\hat{c}(\dvol)$ is bounded and commutes with $P_t$, we only need the following lemma.
\begin{lemma}\label{int representation norm convergent}
The operator
    $$P_tD(1+D^2)^{-1/2}P_t-P_tD(1+(P_tD)^2)^{-1/2}$$ 
is compact on $\mathbf{H}_{\chi^{t/2}}^{0,\text{\normalfont even}}(M,f)$. 
\end{lemma}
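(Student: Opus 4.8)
The plan is to reduce the compactness of the difference of the two bounded transforms to the compactness of the \emph{resolvent difference} $(1+D^2)^{-1/2} - (1+(P_tD)^2)^{-1/2}$ on $\mathbf{H}^{0,\even}_{\chi^{t/2}}(M,f)$, together with the already-established Gårding estimate for $P_tD$ from Theorem \ref{garding inequalities}. Concretely, I would first record the elementary resolvent identity
\begin{align*}
(1+A^2)^{-1/2} - (1+B^2)^{-1/2} = \frac{1}{\pi}\int_0^\infty \lambda^{-1/2}\Big((1+\lambda+A^2)^{-1} - (1+\lambda+B^2)^{-1}\Big)\,d\lambda,
\end{align*}
valid for self-adjoint $A,B$, and apply it with $A = D$ (restricted appropriately) and $B = P_tD$ acting on $\mathbf{H}^{0}_{\chi^{t/2}}(M,f)$. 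The integrand differences can be expanded via the second resolvent identity $(1+\lambda+A^2)^{-1} - (1+\lambda+B^2)^{-1} = (1+\lambda+A^2)^{-1}(B^2-A^2)(1+\lambda+B^2)^{-1}$, so everything comes down to controlling $P_tD(1+D^2)^{-1/2}P_t - P_tD(1+(P_tD)^2)^{-1/2}$ after a similar manipulation; the point of passing through resolvents is that $(1+\lambda+\Delta)^{-1}$-type operators are the natural objects on which to see the commutator $[P_t, D]$ produce a gain in Sobolev regularity.

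The heart of the matter is that the commutator $[P_t, D]$ — equivalently, by Lemma \ref{orthogonal project composition with the dirac}, the zeroth-order terms $A_t^{-1}dA_t\wedge - A_t^{-1}\grad A_t\interiorproduct$ together with the projection defect — is an operator that, once sandwiched between the (locally smoothing) parametrices of $D$ and $P_tD$, maps $\mathbf{H}^0$ into $\mathbf{H}^{0}$ through a chain that factors through an inclusion $\mathbf{H}^{s}_{\chi^{t/2}}(M,f) \hookrightarrow \mathbf{H}^{0}_{\chi^{t/2}}(M,f)$ with $s > 0$. That inclusion is compact: this is precisely the Rellich-type compactness that underlies the finite-dimensionality assertions in Theorem \ref{garding inequalities} and Theorem \ref{hodge theorem in noncompact}, and it holds here because $f\cdot\Omega^i_{\chi^{t/2}}(M)$ is supported (up to the $G$-action) in the precompact set $U'$, so Sobolev embedding on a relatively compact region applies after unwinding the identification $\eta \mapsto f\eta$. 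So the strategy is: write the difference as (parametrix of $P_tD$) $\circ$ (bounded error built from $[P_t,D]$ and lower-order curvature-of-$A_t$ terms) $\circ$ (parametrix of $D$), use Gårding (Theorem \ref{garding inequalities}) to see each parametrix gains one Sobolev degree, and conclude compactness from the compact embedding $\mathbf{H}^1_{\chi^{t/2}}(M,f)\hookrightarrow \mathbf{H}^0_{\chi^{t/2}}(M,f)$.

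In carrying this out I would be careful about two bookkeeping points. First, $D$ itself does not preserve $\mathbf{H}^{0}_{\chi^{t/2}}(M,f)$, so the expression $P_tD(1+D^2)^{-1/2}P_t$ must be read as the operator $\mathbf{H}^0 \xhookrightarrow{} L^2(\Lambda^*T^*M) \to L^2 \xrightarrow{P_t} \mathbf{H}^0$; all resolvent manipulations for the ``$D$ side'' happen in the ambient $L^2(\Lambda^*T^*M)$, and only the final projection brings us back. Second, the functional calculus for $P_tD$ is legitimate only because Theorem \ref{garding inequalities} guarantees $P_tD$ is (essentially) self-adjoint with discrete spectrum near $0$ and an unbounded-Fredholm estimate, which is what makes $(1+(P_tD)^2)^{-1}$ a well-defined operator that improves regularity by two orders; I would invoke Theorem \ref{garding inequalities} explicitly at that step.

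The main obstacle I anticipate is not the abstract scheme but making the ``error operator'' genuinely order-lowering in a form to which Rellich applies: $[P_t, D]$ is built from the integral kernel of $P_t$ against the first-order operator $D$, and one must check that the resulting operator, while not pseudodifferential in the usual closed-manifold sense, still maps $L^2$-forms supported near $Y$ into forms of one higher Sobolev order on a precompact neighborhood — i.e., that no regularity is lost in the averaging over $G$. This is exactly the kind of estimate Mathai–Zhang establish in \cite[Appendix E]{mathaizhang}, so I would model the argument on theirs, transplanting it to the even-form, $\hat{c}(\dvol)$-twisted setting; the twist by $\hat{c}(\dvol)$ is harmless since, as already noted in Lemma \ref{c hat pt d class}, $\hat{c}(\dvol)$ is bounded and commutes with $P_t$, so it can be pulled outside the entire compactness argument at the very end.
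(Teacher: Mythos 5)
Your overall scheme---representing the difference of the two bounded transforms as an integral of resolvent differences, expanding with the second resolvent identity, and extracting compactness from the Garding inequality together with the Rellich-type embedding $\mathbf{H}^1_{\chi^{t/2}}(M,f)\hookrightarrow\mathbf{H}^0_{\chi^{t/2}}(M,f)$---is exactly the paper's. The paper writes
$$P_tD(1+(P_tD)^2)^{-1/2}-P_tD(1+D^2)^{-1/2} = \int_0^\infty P_tD(D^2+1+s^2)^{-1}\bigl(D(1-P_t)D+(1-P_t)DP_tD\bigr)\bigl((P_tD)^2+1+s^2\bigr)^{-1}\,ds,$$
observes that each integrand is compact, and checks norm convergence of the $s$-integral via elementary spectral bounds.

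One step of your plan would fail as stated, and you have identified it as your ``main obstacle'': you want the error operator built from $[P_t,D]$ to be genuinely order-lowering, mapping $L^2$-forms into forms of one higher Sobolev order. It is not. On $\mathbf{H}^{0,\even}_{\chi^{t/2}}(M,f)$ one computes $(1-P_t)D(f\omega)=c(df-fA_t^{-1}dA_t)\omega$, i.e.\ Clifford multiplication by a fixed bundle endomorphism --- bounded and of order zero, but in no way regularizing, so no amount of care with the averaging over $G$ will produce a Sobolev gain there. The compactness of the integrand must instead be attached to the resolvent factor: $P_tD\bigl((P_tD)^2+1+s^2\bigr)^{-1}$ is compact on $\mathbf{H}^{0,\even}_{\chi^{t/2}}(M,f)$ precisely by Theorem \ref{garding inequalities} plus Rellich, and composing it with the merely bounded error $(1-P_t)D$ keeps it compact. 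Once you relocate the Rellich input from the commutator to the resolvent, and add the explicit bounds $\|P_tD((P_tD)^2+1+s^2)^{-1}\|\le\tfrac12(1+s^2)^{-1/2}$, $\|D(D^2+1+s^2)^{-1}\|\le\tfrac12(1+s^2)^{-1/2}$, $\|D(D^2+1+s^2)^{-1}D\|\le 1$ to establish norm convergence of the integral (a verification your write-up omits but which is needed to pass compactness to the limit), your argument closes and coincides with the paper's.
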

\begin{proof}
In fact, we have 
\begin{align*}
& P_tD(1+(P_tD)^2)^{-1/2}-P_tD(1+D^2)^{-1/2}\\
=\ & \int_0^\infty P_tD\left((P_tD)^2+1+s^2)^{-1}-(D^2+1+s^2)^{-1}\right)ds\\
=\ & \int_0^\infty P_tD(D^2+1+s^2)^{-1}\left(D^2-(P_tD)^2\right)((P_tD)^2+1+s^2)^{-1}ds\\
=\ & \int_0^\infty P_tD(D^2+1+s^2)^{-1}\left(D(1-P_t)D+(1-P_t)DP_tD\right)((P_tD)^2+1+s^2)^{-1}ds.
\end{align*}
Notice that $\forall \omega\in\Omega^{\text{even}}_{\chi^{t/2}}(M)$, $$(1-P_t)D(f\omega) = c(df-fA^{-1}dA)\omega.$$
Therefore, $(1-P_t)D|_{\mathbf{H}_{\chi^{t/2}}^{0,\text{even}}(M,f)}$ is bounded. Also, since $P_tD((P_tD)^2+1+s^2)^{-1}$ is compact, the integrand is compact on $\mathbf{H}_{\chi^{t/2}}^{0,\text{even}}(M,f)$. By the following four inequalities
$$\|P_tD((P_tD)^2+1+s^2)^{-1}\|\leq \dfrac{1}{2}(1+s^2)^{-1/2},\ \ \|D(D^2+1+s^2)^{-1}\|\leq \dfrac{1}{2}(1+s^2)^{-1/2},$$
$$\|D(D^2+1+s^2)^{-1}D\|\leq 1,\ \ \|(P_tD)^2((P_tD)^2+1+s^2)^{-1}\|\leq 1,$$
we find that the integral is norm convergent. Therefore, $$P_tD(1+D^2)^{-1/2}P_t-P_tD(1+(P_tD)^2)^{-1/2}$$ 
is compact on $\mathbf{H}_{\chi^{t/2}}^{0,\text{even}}(M,f)$. 
\end{proof}
When $t = 1$, $[\hat{c}\circ P_1D]$ identifies with $\mathbb{Z}_2$ by 
$$\ker\left(\hat{c}(\dvol)\circ P_1D(1+(P_1D)^2)^{-1/2}: \mathbf{H}_{\chi^{t/2}}^{0,\text{even}}(M,f)\to \mathbf{H}_{\chi^{t/2}}^{0,\text{even}}(M,f)\right)\mod 2,$$ which equals $k(M,G)$ according to Theorem \ref{hodge theorem in noncompact}. Thus, Theorem \ref{main result 1} is proved. 

\section{Vanishing theorem}\label{atiyah's original idea in our proof}
In this section, we prove Theorem \ref{topological version of main result and main result 1} using Atiyah's perturbation for Theorem \ref{atiyah vanishing theorem} (See \cite[Section 4]{atiyah2013vector} and \cite[Chapter 7]{wittendeformation} for details). 

Without loss of generality, we assume two $G$-invariant vector fields $V_1$ and $V_2$ on $M$ such that $\langle V_1,V_2\rangle = 0$ and $\langle V_1, V_1\rangle = \langle V_2, V_2\rangle = 1$. 

As in \cite[Section 4]{atiyah2013vector} and \cite[Section 7.2]{wittendeformation}, we perturb $D = d+d^*$ into $$T = \dfrac{1}{2}\left(D-\hat{c}(V_1)\hat{c}(V_2)D\hat{c}(V_2)\hat{c}(V_1)\right).$$ Then, we have the same calculations as \cite[(7.10)]{wittendeformation}: 
\begin{lemma}
    The operator $\hat{c}(\dvol)$ anti-commutes with $T$. Moreover, $T = D+B$ with 
    $$B = \dfrac{1}{2}\sum_{i = 1}^{4n+1}c(e_i)\hat{c}(V_1)\hat{c}(\nabla_{e_i}V_1)+\dfrac{1}{2}\sum_{i=1}^{4n+1}c(e_i)\hat{c}(V_1)\hat{c}(V_2)\hat{c}(\nabla_{e_i}V_2)\hat{c}(V_1).$$
    Thus, $B$ satisfies our assumptions in Section {\normalfont\ref{hodge theory and garding ineq section}}. 
\end{lemma}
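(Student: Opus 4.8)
The plan is to reduce the whole statement to the Clifford relations together with the compatibility of the Levi--Civita connection with Clifford multiplication; no hard analysis enters. First I would record the ingredients: in the paper's conventions $c(e_i)^2=-1$, $\hat c(e_i)^2=1$, the two multiplications anticommute, $c(X)\hat c(Y)+\hat c(Y)c(X)=0$, and $\nabla$ obeys the Leibniz rule $\nabla_X(\hat c(Y)\alpha)=\hat c(\nabla_X Y)\alpha+\hat c(Y)\nabla_X\alpha$ (and likewise for $c$). Two consequences I would isolate at the outset: since $\dim M=4n+1$ is odd, $\hat c(\dvol)$ is central in the $\hat c$-Clifford algebra — so it commutes with $\hat c(V_1)$ and $\hat c(V_2)$ — while it anticommutes with every $c(X)$; and since $\dvol$ is parallel, $\hat c(\dvol)$ commutes with each $\nabla_{e_i}$. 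Together these give $\hat c(\dvol)D=-D\hat c(\dvol)$ (already used in Section~\ref{elliptic section}), and then for the anticommutation $\hat c(\dvol)T=-T\hat c(\dvol)$ it suffices to slide $\hat c(\dvol)$ through the conjugated summand $\hat c(V_1)\hat c(V_2)D\hat c(V_2)\hat c(V_1)$: it passes the four $\hat c(V_j)$ with no sign and crosses $D$ exactly once, so that summand anticommutes with $\hat c(\dvol)$ as well, and $T$ is a linear combination of it and $D$.

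The identity $T=D+B$ is the computation underlying \cite[(7.10)]{wittendeformation}. Writing $D=\sum_i c(e_i)\nabla_{e_i}$ and applying the Leibniz rule to $\nabla_{e_i}\circ(\hat c(V_2)\hat c(V_1))$ splits $\hat c(V_1)\hat c(V_2)D\hat c(V_2)\hat c(V_1)$ into one first-order piece and two zeroth-order pieces. In the first-order piece one commutes $c(e_i)$ past $\hat c(V_2)\hat c(V_1)$ (two sign changes, net $+$) and then uses $\hat c(V_1)^2=\hat c(V_2)^2=1$ — here $|V_1|=|V_2|=1$ — to collapse the four $\hat c(V_j)$'s, recovering $\sum_i c(e_i)\nabla_{e_i}=D$; combined with the $D$ already present in $T$, what remains is that $B=T-D$ equals the zeroth-order remainder. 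In each of its two terms I would then move $c(e_i)$ all the way to the left, and in the term carrying $\hat c(\nabla_{e_i}V_1)$ collapse the middle $\hat c(V_2)^2=1$, at which point the remainder is exactly the two displayed sums. The only point requiring care is the sign bookkeeping — the $c/\hat c$ anticommutator and the normalization $\hat c(V_j)^2=1$.

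Finally I would verify the three standing hypotheses of Section~\ref{hodge theory and garding ineq section}: no derivative of $\alpha$ survives in $B$, so $B$ is a bundle endomorphism, hence an order-$0$ operator; each of its summands is a product of an odd number of Clifford generators (three and five respectively), each of which shifts form-degree by $\pm1$, so $B$ interchanges $\Lambda^{\even}T^*M$ and $\Lambda^{\mathrm{odd}}T^*M$; and $B$ is assembled functorially from $V_1,V_2$ (which are $G$-invariant) and $\nabla V_1,\nabla V_2$ (which are $G$-invariant because the metric, hence $\nabla$, is) together with the $G$-equivariant Clifford multiplications, so $B$ is $G$-equivariant. Hence Theorems~\ref{garding inequalities} and~\ref{hodge theorem in noncompact} apply to $T$. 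I expect the sign bookkeeping in the second step to be the only point that genuinely demands attention.
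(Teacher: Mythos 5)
Your outline is the right one---everything here is Clifford bookkeeping plus the Leibniz rule---but the central step contains a sign problem that your own computation exposes and that you then paper over. Write $\sigma=\hat{c}(V_1)\hat{c}(V_2)$, so that $\hat{c}(V_2)\hat{c}(V_1)=\sigma^{-1}$ and the displayed perturbation is $T=\tfrac12(D-\sigma D\sigma^{-1})$. You correctly find that the first-order piece of $\sigma D\sigma^{-1}$ is $+D$ (two anticommutations of $c(e_i)$ past the $\hat{c}(V_j)$, then $\hat{c}(V_1)^2=\hat{c}(V_2)^2=1$). But then $\tfrac12(D-\sigma D\sigma^{-1})=\tfrac12(D-D-R)=-\tfrac12 R$, where $R$ is your zeroth-order remainder: the two copies of $D$ cancel, and the sentence ``combined with the $D$ already present in $T$, what remains is that $B=T-D$ equals the zeroth-order remainder'' is a non sequitur. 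With $T$ exactly as displayed you get a purely order-zero operator equal to $-B$, not $D+B$. The identity in the lemma is the one for the perturbation $T=\tfrac12\bigl(D-\hat{c}(V_1)\hat{c}(V_2)D\hat{c}(V_1)\hat{c}(V_2)\bigr)=\tfrac12\bigl(D+\sigma D\sigma^{-1}\bigr)$ (using $\sigma^2=-1$), which is the operator of \cite[(7.10)]{wittendeformation}; with that $T$ your computation gives $T=\tfrac12(2D+R)=D+\tfrac12R$, and $\tfrac12R$ is exactly the displayed $B$. So you must either flag the discrepancy in the stated definition of $T$ (the right-hand conjugating pair is transposed) or accept that your derivation fails at precisely the step you yourself singled out as the only delicate one. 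The sign also matters later: $\sigma$ commutes with $\tfrac12(D-\sigma D\sigma)$ but anticommutes with $\tfrac12(D-\sigma D\sigma^{-1})$, and Section~\ref{atiyah's original idea in our proof} asserts the commutation.

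The remainder of your outline is correct and complete: $\hat{c}(\dvol)$ is central among the $\hat{c}$'s, anticommutes with every $c(e_i)$ and hence with $D$ and with each conjugate of $D$ by products of $\hat{c}(V_j)$'s, so it anticommutes with $T$ for either choice of sign; the zeroth-order remainder, after moving $c(e_i)$ to the left and collapsing $\hat{c}(V_2)^2=1$, is exactly the two displayed sums; and the parity and $G$-equivariance of $B$ follow as you say. Only the sign audit of the middle step needs to be redone honestly.
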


Repeating the procedure for $\hat{c}(\dvol)\circ P_tD(1+(P_tD)^2)^{-1/2}$, we see that 
$$\hat{c}(\dvol)\circ P_tT\left(1+(P_tT)^2\right)^{-1/2}: \mathbf{H}_{\chi^{t/2}}^{0,\text{even}}(M,f)\to \mathbf{H}_{\chi^{t/2}}^{0,\text{even}}(M,f)$$
defines a class $[\hat{c}\circ P_tT]\in KKO(\mathbb{R},Cl_{0,1})$ as well. 
\begin{lemma}
The two classes are the same: $[\hat{c}\circ P_tT] = [\hat{c}\circ P_tD]$. 
\end{lemma}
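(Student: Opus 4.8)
The plan is to write $T=D+B$ with $B$ the bounded, $G$-equivariant, order-$0$ operator of the preceding lemma, and to show that the two bounded transforms differ by a \emph{compact} operator. Since $[\hat c\circ P_tD]$ and $[\hat c\circ P_tT]$ are both represented on the Hilbert module $\mathbf{H}_{\chi^{t/2}}^{0,\text{even}}(M,f)\hat\otimes Cl_{0,1}$ with the same (scalar) representation of $\mathbb R$, differing only in the degree-$1$ operator, and since a compact perturbation of that operator does not change the $KKO(\mathbb R,Cl_{0,1})$-class (the straight-line path between the two operators is a norm-continuous family of Kasparov modules, the ``compact'' conditions being stable under adding compacts), it suffices to prove that
$$\Delta\;\coloneqq\;P_tD\bigl(1+(P_tD)^2\bigr)^{-1/2}-P_tT\bigl(1+(P_tT)^2\bigr)^{-1/2}$$
is compact on $\mathbf{H}_{\chi^{t/2}}^{0,\text{even}}(M,f)$; indeed $\hat c(\dvol)$ commutes with $P_t$, so $\hat c(\dvol)P_tT(1+(P_tT)^2)^{-1/2}$ and $\hat c(\dvol)P_tD(1+(P_tD)^2)^{-1/2}$ then differ by the compact operator $\hat c(\dvol)\Delta$.

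To prove $\Delta$ compact I would imitate the computation of Lemma \ref{int representation norm convergent}. For the self-adjoint operators $P_tD,P_tT$ the formula $x(1+x^2)^{-1/2}=\tfrac{2}{\pi}\int_0^\infty x(1+x^2+s^2)^{-1}\,ds$ gives, with $R_D(s)=(1+(P_tD)^2+s^2)^{-1}$ and $R_T(s)=(1+(P_tT)^2+s^2)^{-1}$,
$$\Delta=\frac{2}{\pi}\int_0^\infty\Bigl(P_tDR_D(s)-P_tTR_T(s)\Bigr)\,ds .$$
Using $P_tT=P_tD+P_tB$ and the resolvent identity $R_D(s)-R_T(s)=R_D(s)\bigl((P_tT)^2-(P_tD)^2\bigr)R_T(s)$, together with $(P_tT)^2-(P_tD)^2=P_tD\,P_tB+P_tB\,P_tD+(P_tB)^2$, the integrand becomes a sum of four explicit terms. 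Here $P_tB$ is bounded (as $B$ is order $0$, $G$-equivariant, and $M/G$ is compact); $P_tD$ commutes with $R_D(s)$, so $P_tDR_D(s)P_tD=(P_tD)^2R_D(s)$ has norm $\le 1$; and by Theorem \ref{garding inequalities} both $P_tD$ and $P_tT$ have compact resolvent on $\mathbf{H}_{\chi^{t/2}}^{0}(M,f)$, so $R_D(s)$, $R_T(s)$, $P_tDR_D(s)$ and $(P_tD)^2R_D(s)$ are all compact. Combining this with the bounds $\|P_tDR_D(s)\|\le\tfrac12(1+s^2)^{-1/2}$, $\|P_tTR_T(s)\|\le\tfrac12(1+s^2)^{-1/2}$, $\|R_T(s)\|\le(1+s^2)^{-1}$, and the rewriting $P_tDR_T(s)=P_tTR_T(s)-P_tBR_T(s)$, one checks termwise that the integrand is compact with norm $O\bigl((1+s^2)^{-1}\bigr)$, so the integral converges in operator norm to a compact operator.

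The main obstacle is entirely bookkeeping: one must group the four summands so that every occurrence of the unbounded factor $P_tD$ (or $P_tT$) is absorbed against a neighbouring resolvent — via $[P_tD,R_D(s)]=0$ and $P_tDR_T(s)=P_tTR_T(s)-P_tBR_T(s)$ — while still leaving a net decay of at least $(1+s^2)^{-1}$ for the $s$-integral to converge; the extra factor $B$ is harmless because it is bounded, and in the term carrying $(P_tB)^2$ it even yields faster decay. This is precisely the structure of the proof of Lemma \ref{int representation norm convergent}, with $P_tB$ playing the role of the commutator term $(1-P_t)D$ there, so I would present it by pointing to that computation and recording only the modified estimates; the equality $[\hat c\circ P_tT]=[\hat c\circ P_tD]$ then follows at once from the compact-perturbation invariance noted above.
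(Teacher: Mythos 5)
Your proposal is correct and follows essentially the same route as the paper: both express the difference of the bounded transforms as a norm-convergent integral of resolvent differences, exploit that $P_tB$ is bounded, and then invoke the compactness and decay estimates already established in the proof of Lemma \ref{int representation norm convergent}. The only difference is cosmetic — the paper packages the integrand as the single product $((P_tD)^2+s^2+1)^{-1}\left(P_tD\,P_tB\,P_tT-(s^2+1)P_tB\right)((P_tT)^2+s^2+1)^{-1}$, whereas you expand via the resolvent identity into four terms and estimate them separately.
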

\begin{proof}
    We find the difference 
    \begin{align*}
        & P_tD\left(1+(P_tD)^2\right)^{-1/2} - P_tT\left(1+(P_tT)^2\right)^{-1/2}\\
        =\ & \dfrac{2}{\pi}\int_0^\infty ((P_tD)^2+s^2+1)^{-1}\left(P_tD\cdot P_tB\cdot P_tT-(s^2+1)P_tB\right)(T^2+s^2+1)^{-1}ds. 
    \end{align*}
    Since $P_tB$ is bounded, we just need to apply the same procedure as Lemma \ref{int representation norm convergent} to see this difference is compact. 
\end{proof}
Thus, we get
$$\ind_2^G([\dsig]) = [\hat{c}\circ P_tD] = [\hat{c}\circ P_tT].$$
Using the isomorphism $\delta: KKO(\mathbb{R},Cl_{0,1})\xrightarrow{\cong}\mathbb{Z}_2$, the final step is to calculate 
$$\dim\ker\left(\hat{c}(\dvol)\circ P_tT(1+(P_tT)^2)^{-1/2}: \mathbf{H}_{\chi^{t/2}}^{0,\text{even}}(M,f)\to \mathbf{H}_{\chi^{t/2}}^{0,\text{even}}(M,f)\right) \mod 2.$$
However, the operator $\hat{c}(V_1)\hat{c}(V_2)$ commutes with $\hat{c}(\dvol)\circ P_tT(1+(P_tT)^2)^{-1/2}$ and satisfies $(\hat{c}(V_1)\hat{c}(V_2))^2 = -1$. Thus, it assigns a complex structure to the kernel, making the dimension an even number. Therefore, $\ind_2^G([\dsig]) = 0$, and Theorem \ref{topological version of main result and main result 1} is proved. 
\begin{remark}\normalfont
    In the above proof, we perturb the class $[\hat{c}\circ P_tD]$ instead of $[\dsig]$. It could be very tempting to use $T$ to define a class $[\tsig]\in KKO^G(C_0(M),Cl_{0,1})$ and claim that this class is equal to $[\dsig]$. However, this idea does not work since $\varepsilon\cdot B$ may not be bounded on $\ltwoeven$ for all $\varepsilon\in C_0(M)$. Such an obstacle highlights the effects of $f$ and $P_t$.
\end{remark}

\section{Modular characters and cohomology}\label{concluding remarks section}
In this section, we explain the parameter $t$, how we determine Definition \ref{true kervaire}, and how the modular character $\chi$ is related to the cohomology of $M$. 

Recall the definition of the cohomology group $H^i_{\chi^{t/2}}(M,d_{A_t})$ given in Section \ref{hodge theory and garding ineq section}. The boundary operator $d_{A_t} = A_t^{-1}dA_t$ is twisted by the average function $A_t$. 
The Hodge theory given in Theorem \ref{hodge theorem in noncompact} tells us that the kernel of $P_tD$ on $f\cdot \Omega^i_{\chi^{t/2}}(M)$ is finite dimensional, and is isomorphic to $H^i_{\chi^{t/2}}(M,d_{A_t})$. With $\hat{c}(\dvol)$, the kernel of $\hat{c}(\dvol) P_tD$ on $\mathbf{H}^{0,\even}_{\chi^{t/2}}(M,f)$ is isomorphic to $H^\even_{\chi^{t/2}}(M,d_{A_t})$. Thus, we actually get a semi-characteristic at each $t$: 
\begin{definition}\normalfont
    The $t$-th topological Kervaire semi-characteristic is
    $$k_t(M,G) \coloneqq \sum_{i\text{\ is\ even}} \dim H^i_{\chi^{t/2}}(M,d_{A_t})\ \mod 2,$$
   and all $k_t(M,G)$'s are the same. 
\end{definition}
Here is the question. Is it possible that we use a chain complex not with the twisted
$d_{A_t} = A_t^{-1} d A_t$, but with the original de Rham $d$, to define $k_t(M,G)$? This seems to be an easy question answered by the canonical ``isomorphism''
$$\begin{tikzcd}
	\Omega^i_{\chi^{t/2}}(M) && \Omega^{i+1}_{\chi^{t/2}}(M) \\
 \\
	\Omega^i_{\chi^{t/2}}(M) && \Omega^{i+1}_{\chi^{t/2}}(M)
	\arrow["d", from=1-1, to=1-3]
	\arrow["A_t^{-1}", from=1-1, to=3-1]
	\arrow["A_t^{-1}", from=1-3, to=3-3]
	\arrow["A_t^{-1}dA_t", from=3-1, to=3-3]
\end{tikzcd}\ \ .$$
However, this diagram does not always commute since by Lemma \ref{A is G-equivariant}, for any form $\omega$ satisfying $g^*\omega = \chi(g)^{t/2}\omega$, we have $g^*(A_t^{-1}\omega) = \chi(g)^{t-\frac{1}{2}}\omega$. So, the only case where $A_t$ preserves $\Omega^i_{\chi^{t/2}}(M)$ is $t = 1$. Thus, the topological Kervaire semi-characteristic without twisted boundary map is given in Definition \ref{true kervaire}. 

However, Definition \ref{true kervaire} still engages $\chi$. Shouldn't it be a more intuitive choice to use the invariant cohomology, i.e., the cohomology $H_{\inv}^i(M,d)$ given by forms satisfying $g^*\omega = \omega$ ($\forall g\in G$) with the original de Rham $d$, to define the semi-characteristic? 

As we know, if both $M$ and $G$ are compact, then the invariant cohomology is isomorphic to the original de Rham cohomology. 
Back to our proper cocompact case, we already have $H^i_{\chi^0}(M,d_{A_0})$. Thus, it would be nice if we could replace $d_{A_0}$ by $d$ and let
$$k'(M,G) \coloneqq \sum_{i\text{\ is\ even}}\dim H^i_{\inv}(M,d)\ \mod 2$$
be a more intuitive ``semi-characteristic''. 
However, since $G$ can be non-unimodular, for our $(4n+1)$-dimensional $M$, $k'(M,G)$ does not fit well into the vanishing theorem as $k(M,G)$. 


\begin{example}\label{uni modular non unimodular example comparision}
\normalfont
    Let $\text{Aff}(1)$ be the affine group 
    $$\left\{\begin{bmatrix}
        a&b \\
        0&1
    \end{bmatrix}: a > 0, b \in\mathbb{R}\right\}.$$
    Its Lie algebra is generated by two matrices $X = \begin{bmatrix}
        1&0\\
        0&0
    \end{bmatrix}, Y = \begin{bmatrix}
        0&1\\
        0&0
    \end{bmatrix}.$
    They satisfy $[X, Y] = Y$, so $\text{Aff}(1)$ is non-unimodular (See \cite[Example 7.5.25(c)]{2011structureandgeometryofliegroups}).

    Now, considering the group $$G = \text{Aff}(1)\times\text{Aff}(1)\times\mathbb{R}$$ with the left action on itself. This action is free and proper, and $G$ admits five everywhere independent $G$-invariant vector fields. However, by applying \cite[Proposition 14.29]{lee2012introduction} together with $[X,Y] = Y$, we get
    \begin{align*}
    H^0_{\inv}(G,d) \cong \mathbb{R},
    H^1_{\inv}(G,d) \cong \mathbb{R}^3,
    H^2_{\inv}(G,d) \cong \mathbb{R}^5,\\
    H^3_{\inv}(G,d) \cong \mathbb{R}^4,
        H^4_{\inv}(G,d) \cong \mathbb{R},
    H^5_{\inv}(G,d) = 0.
    \end{align*}
Therefore, we see that $$k'(G,G)=\sum_{i = 0,2,4}\dim H^i_\inv(G,d) = 1 \neq 0\ \mod 2,$$ which does not fit into the vanishing theorem. However, the Euler characteristic is exactly zero in this case. 
\end{example}

The phenomenon in Example \ref{uni modular non unimodular example comparision} is because $H^i_{\inv}(M,d)$ in general does not satisfy the Poincar\'e duality (See the proof of \cite[Theorem 4.1]{tangyaozhang}), which means $k'(M,G)$ cannot represent the even and odd degree parts at the same time, and is not a suitable choice to be the semi-characteristic. However, when $G$ is unimodular, this $k'(M,G)$ is equal to $k(M,G)$, which brings convenience to calculations. 
\begin{example}\label{justifying example for condition}
\normalfont
    We let $G = \mathbb{Z}$ with the discrete topology and the addition operation. Thus, $G$ is a noncompact discrete Lie group. Then, we equip the $5$-dimensional $M = \mathbb{S}^5\times\mathbb{Z}$ with the natural action by integers. By the de Rham cohomology of $\mathbb{S}^5$, we find 
    \begin{align*}
        H^0_{\chi^{1/2}}(M,d) \cong\  & H^5_{\chi^{1/2}}(M,d) \cong \mathbb{R},  \\
    H^1_{\chi^{1/2}}(M,d) \cong\  & H^2_{\chi^{1/2}}(M,d) \cong  H^3_{\chi^{1/2}}(M,d) \cong  H^4_{\chi^{1/2}}(M,d) \cong  0.
    \end{align*}
    Therefore, $k(M,G) = 1$ in this case. 
    
    In fact, the Kervaire semi-characteristic of $\mathbb{S}^5$ (without group actions) is also $1$. By Theorem \ref{atiyah vanishing theorem} in the closed case, $\mathbb{S}^5$ has only one everywhere independent vector field on it. Thus,  $M = \mathbb{S}^5\times\mathbb{Z}$ has only one everywhere independent $\mathbb{Z}$-invariant vector field on it.
\end{example}
Example \ref{justifying example for condition} shows that the assumption of two vector fields in Theorem \ref{topological version of main result and main result 1} is necessary. Meanwhile, we see that the connectedness of $M$ or $G$ is not required. 
\bibliographystyle{abbrv}
\bibliography{mybib.bib}

\begin{thebibliography}{10}

\bibitem{atiyah2013vector}
M.~F. Atiyah.
\newblock {\em Vector Fields on Manifolds}.
\newblock Arbeitsgemeinschaft f{\"u}r Forschung des Landes Nordrhein-Westfalen. Springer Fachmedien Wiesbaden, 1970.

\bibitem{atiyahsingerskewadjoint}
M.~F. Atiyah and I.~M. Singer.
\newblock Index theory for skew-adjoint {F}redholm operators.
\newblock {\em Publications Mathématiques de l'IHÉS}, 37:5 -- 26, 1969.

\bibitem{baumkknotes}
P.~Baum, G.~Corti{\~n}as, R.~Meyer, R.~S{\'a}nchez-Garc{\'\i}a, M.~Schlichting, and B.~To{\"e}n.
\newblock {\em Topics in Algebraic and Topological K-Theory}.
\newblock Springer Berlin Heidelberg, 2010.

\bibitem{Bourne_2022}
C.~Bourne.
\newblock Locally equivalent quasifree states and index theory.
\newblock {\em Journal of Physics A: Mathematical and Theoretical}, 55(10):104004, mar 2022.

\bibitem{bredon1972introductionliegroupaction}
G.~Bredon.
\newblock {\em Introduction to Compact Transformation Groups}.
\newblock Elsevier Science, 1972.

\bibitem{connes1995noncommutative}
A.~Connes.
\newblock {\em Noncommutative Geometry}.
\newblock Elsevier Science, 1995.

\bibitem{conneslongitudinal}
A.~Connes and G.~Skandalis.
\newblock The longitudinal index theorem for foliations.
\newblock {\em Publications of the Research Institute for Mathematical Sciences}, 20, 12 1984.

\bibitem{davidson1996c}
K.~Davidson.
\newblock {\em $C^*$-Algebras by Example}.
\newblock Fields Institute for Research in Mathematical Sciences Toronto: Fields Institute monographs. American Mathematical Society, 1996.

\bibitem{2011structureandgeometryofliegroups}
J.~Hilgert and K.-H. Neeb.
\newblock {\em Structure and Geometry of Lie Groups}.
\newblock Springer, 2011.

\bibitem{GGKasparov_1981}
G.~G. Kasparov.
\newblock The operator {$K$}-functor and extensions of ${C}^*$-algebras.
\newblock {\em Mathematics of the USSR-Izvestiya}, 16(3), 1981.

\bibitem{Kasparovequivariant}
G.~G. Kasparov.
\newblock Equivariant {$KK$}-theory and the {N}ovikov conjecture.
\newblock {\em Invent Math}, 91, 1988.

\bibitem{handbookofglobalanalysis}
D.~Krupka and D.~Saunders.
\newblock {\em Handbook of Global Analysis}.
\newblock Elsevier Science, 2011.

\bibitem{Lafforgue}
V.~Lafforgue.
\newblock Banach {$KK$}-theory and the {Baum}-{Connes} {C}onjecture.
\newblock In C.~Casacuberta, R.~M. Mir{\'o}-Roig, J.~Verdera, and S.~Xamb{\'o}-Descamps, editors, {\em European Congress of Mathematics}, pages 31--46. Birkh{\"a}user Basel, 2001.

\bibitem{lee2012introduction}
J.~M. Lee.
\newblock {\em Introduction to Smooth Manifolds}.
\newblock Springer, 2nd edition, 2012.

\bibitem{mathaizhang}
V.~Mathai and W.~Zhang.
\newblock Geometric quantization for proper actions (with an appendix by {U}. {B}unke).
\newblock {\em Advances in Mathematics}, 225(3):1224--1247, 2010.

\bibitem{roetwoassemblymaps}
J.~Roe.
\newblock {Comparing analytic assembly maps}.
\newblock {\em The Quarterly Journal of Mathematics}, 53(2):241--248, 06 2002.

\bibitem{rosenberg2015structureapplicationsrealcalgebras}
J.~Rosenberg.
\newblock Structure and applications of real ${C}^*$-algebras, 2015.

\bibitem{tangyaozhang}
X.~Tang, Y.-J. Yao, and W.~Zhang.
\newblock Hopf cyclic cohomology and {H}odge theory for proper actions.
\newblock {\em Journal of Noncommutative Geometry}, 7:885–905, 2013.

\bibitem{wittendeformation}
W.~Zhang.
\newblock {\em Lectures on Chern-Weil Theory and Witten Deformations}.
\newblock World Scientific, 2001.

\end{thebibliography}
\end{document}